\documentclass[12pt]{article}

\usepackage[margin=1.5in]{geometry}
\usepackage{amsmath,amsfonts,graphicx,framed}
\usepackage{amssymb}
\usepackage{amsthm}
\usepackage{fancyhdr}
\usepackage{float}
\usepackage{caption}
\usepackage{comment}
\usepackage[utf8]{inputenc}
\usepackage{authblk}
\usepackage[mathscr]{euscript}
\usepackage{mathtools}
\usepackage{xcolor}

\usepackage[hypertexnames=false]{hyperref}

\theoremstyle{plain}% default
\newtheorem{thm}{Theorem}
\newtheorem{lem}{Lemma}
\newtheorem{cor}{Corollary}

\newtheorem{prop}{Proposition}
\theoremstyle{definition}
\newtheorem{defn}{Definition}
\newtheorem{rem}{Remark}

%\theoremstyle{remark}

%\numberwithin{equation}{section}

\newcommand{\pd}{\partial}

\newcommand{\mbb}{\mathbb}

\newcommand{\ep}{\varepsilon}
\newcommand{\re}{\mbb R}
\newcommand{\al}{\alpha}
\newcommand{\mc}{\mathcal}

\newcommand{\eqal}[1]{\begin{equation}\begin{aligned}#1\end{aligned}\end{equation}}

\newcommand{\ov}{\overline}

\newcommand{\ti}{\tilde}

\begin{document}

\title{Recovering a quasilinear conductivity from boundary measurements}

\author{Ravi Shankar}

\date{\today}

\maketitle

\begin{abstract}
We consider the inverse problem of recovering an isotropic quasilinear conductivity from the Dirichlet-to-Neumann map when the conductivity depends on the solution and its gradient.  We show that the conductivity can be recovered on an open subset of small gradients, hence extending a partial %known
result %Mu\~noz and Uhlmann
to all real analytic conductivities.  %We can also recover non-analytic conductivities with additional growth bounds along large gradient values.  
We also recover non-analytic conductivities with additional growth assumptions along large gradients.  Moreover, the results hold for non-homogeneous conductivities if the non-homogeneous part is assumed known.
\end{abstract}

\section{Introduction}

\subsection{Statement of results}
Let $\Omega\subset\re^n$, $n\ge 2$, be a smooth bounded domain, and $u(x)\in C^\infty(\ov\Omega)$ be the smooth solution of the quasilinear boundary value problem
\eqal{\label{eq}
\nabla\cdot(a(u,\nabla u)\nabla u)&=0,\qquad x\in\Omega,\\
u|_{\pd\Omega}&=f,
}
where $a(s,p)\in C^\infty(\re\times\re^n)$ satisfies standard conditions to make this BVP wellposed, see \cite{MU} and \cite{GT}.  We assume summation over repeated indices.

\medskip
\noindent
\textit{Ellipticity:}
\eqal{\label{aij}
0<\lambda(s,p)|\xi|^2\le &\,a_{ij}(s,p)\xi_i\xi_j,\quad (s,p,\xi)\in\re\times\re^n\times(\re^n\setminus\{0\}),\\
&a_{ij}:=a\,\delta_{ij}+\frac{1}{2}(a_{p_i}p_j+a_{p_j}p_i).
}
\textit{Coercivity:}
\eqal{
\label{coer}
\lambda(s,p)\ge \lambda_0(|s|)>0,\\
a(s,p) \ge 1.%|p|^2\ge |p|^\beta-|a_1s|^\beta-a_2^\beta,
}
\textit{Growth:}
\eqal{
\label{grow}
|p||\nabla_p a|+|a|\le \mu_0(|s|),\\
(1+|p|)|a_s|\le\mu_0(|s|)|p|.
}
%\textit{Regularity:}
%\eqal{
%\label{smooth}
%\sum_{\ell=0}^k|D^\ell a(s,p)|\le \Lambda_k(s,p)<\infty,\quad k\ge 0.
%}
The bounds $\lambda,\mu_0,\lambda_0
%,\Lambda_k$
$ and domain $\Omega$ are known quantities.  Positive functions $\lambda_0$ and $\mu_0$ are non-increasing and non-decreasing in $|s|$, respectively. %, and constants $\beta>1,a_1,a_2>0$.
We also assume known bounds on the $C^k(K)$ seminorms of $a$ for all $k\ge 0$ and all compact subsets $K$ of the domain of $a$.

\medskip
Now, the Dirichlet-to-Neumann (DN) map is defined by
\eqal{\label{DN}
f\mapsto \Gamma_a(f) %&=a(u,\nabla u)\nabla u\,\lrcorner\, dx^1\wedge\cdots \wedge dx^n|_{\pd\Omega}\\
&=a(u,\nabla u)\frac{\pd u}{\pd \nu}dS,
}
where $dS$ is the Euclidean area form on $\pd\Omega$, and $\frac{\pd u}{\pd \nu}=\nu\cdot\nabla u$ is the Euclidean normal derivative.  

\medskip
Under no additional assumptions, we can recover $a$ from $\Gamma_a$ along sufficiently small gradient values.

\begin{thm}\label{thm:open}
If $\Gamma_a=\Gamma_{\ti a}$, then $a(s,p)=\ti a(s,p)$ on an open set:
$$
\{(s,p):|p|< \Pi(s)\},
$$ 
where $\Pi:\re\to(0,\infty)$ is a positive continuous function which depends on the known quantities.
\end{thm}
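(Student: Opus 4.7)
The plan is to recover the full Taylor jet of $a$ along $\{p=0\}$ by higher-order linearization at constant solutions, and then to upgrade this to agreement on an open neighborhood of $\{p=0\}$ by linearizing at genuinely non-constant reference solutions.  For each $c\in\re$, the constant $u\equiv c$ solves the BVP with $f=c$, for both $a$ and $\ti a$.  Taking $N$-parameter data $f=c+\sum_{j=1}^N\ep_j h_j$ and expanding the solutions as Taylor series in the $\ep_j$, the first-order coefficient $v_j=\pd_{\ep_j}u|_{\ep=0}$ satisfies $\nabla\cdot(a(c,0)\nabla v_j)=0$ with $v_j|_{\pd\Om}=h_j$, so matching the $O(\ep)$ terms of the DN maps of $a$ and $\ti a$ gives $a(c,0)=\ti a(c,0)$ for every $c$.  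The $N$-th mixed derivative $\pd_{\ep_1}\cdots\pd_{\ep_N}u|_{\ep=0}$ satisfies a linear divergence-form equation whose source is polynomial in the $v_j$ and in the partial derivatives $\pd_s^\al\pd_p^\beta a(c,0)$ of total order $\le N$; matching DN maps inductively and using completeness of products of harmonic functions (in the style of Calder\'on and of the higher-order linearization literature for nonlinear inverse problems) recovers every Taylor coefficient of $a$ at each $(c,0)$.

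To pass from this infinite jet at $p=0$ to agreement on an open neighborhood, I would linearize at a non-constant reference.  For any $s_0\in\re$ and $p_0\in\re^n$ with $|p_0|$ sufficiently small, the implicit function theorem applied at the constant base $u\equiv s_0$ (using ellipticity and coercivity) produces a smooth solution $u_0 = s_0 + p_0\cdot x + O(|p_0|^2)$ corresponding to boundary data $f=s_0+p_0\cdot x$, and as $x$ ranges over $\ov\Om$ the pair $(u_0(x),\nabla u_0(x))$ sweeps an open neighborhood of $(s_0,p_0)$.  The linearization around $u_0$ is a linear elliptic PDE whose divergence-form matrix is $a(u_0,\nabla u_0)\,I$ plus a symmetric rank-one correction built from $\nabla_p a$ and $\nabla u_0$, together with a first-order term in $a_s$ and $\nabla_p a$.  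Standard CGO constructions for such operators identify the $x$-dependent coefficients from the linearized DN map, thereby determining $a$ at $(u_0(x),\nabla u_0(x))$ for every $x$, hence on an open neighborhood of $(s_0,p_0)$.

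The main obstacle is that the reference $u_0$ depends on the (unknown) conductivity, so a priori $u_0^a\ne u_0^{\ti a}$ and the two linearized DN maps are built around different backgrounds.  I would bridge this gap by combining the jet recovery from the first step with a perturbative comparison: the difference $w=u_0^a-u_0^{\ti a}$ has vanishing Cauchy data on $\pd\Om$ and satisfies a quasilinear equation whose coefficients differ only by terms flat to infinite order at $p=0$; for $|p_0|$ below a threshold $\Pi(s_0)$ determined quantitatively by $\lambda_0(|s_0|)$, $\mu_0(|s_0|)$, and the known $C^k$ bounds, this forces $w\equiv 0$ via an energy or unique-continuation argument.  Verifying this threshold together with its continuous dependence on $s_0$, and making the CGO construction uniform in the reference solution, is the main technical work; the function $\Pi$ in the statement arises precisely as the quantitative smallness required to close the perturbative step.
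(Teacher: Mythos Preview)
Your proposal follows a genuinely different route from the paper's, and it contains a real gap at the step you flag as ``the main technical work.''

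\medskip
\textbf{The CGO step does not go through.}  The linearized conductivity matrix around a non-constant reference $u_0$ is
\[
a_{ij}(x)=a(u_0,\nabla u_0)\,\delta_{ij}+\tfrac12\bigl(a_{p_i}(u_0,\nabla u_0)\,\pd_j u_0+a_{p_j}(u_0,\nabla u_0)\,\pd_i u_0\bigr),
\]
which is \emph{anisotropic}.  In dimension $n\ge 3$ the interior Calder\'on problem for anisotropic conductivities is open and is in any case subject to a diffeomorphism gauge; there is no ``standard CGO construction'' that identifies such $x$-dependent coefficients from the linearized DN map.  The paper makes exactly this point in the introduction, and its whole strategy is designed to sidestep it: it never attempts interior recovery, only \emph{boundary determination} of the principal symbol of the linearized DN map, which \emph{is} understood anisotropically.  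From the symbol it extracts the tangential block of $a_{ij}$ at a boundary point, and then exploits the algebraic structure (scalar plus symmetric rank-one) to solve for $a$ there.

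\medskip
\textbf{The bridge $u_0^a=u_0^{\ti a}$ does not close from infinite-jet agreement.}  Knowing that $a-\ti a$ vanishes to infinite order along $\{p=0\}$ only gives, for each $N$, a bound of the form $|a-\ti a|\le C_N|p|^N$ near $p=0$.  Plugging the two solutions with the same boundary data into the equation, the difference $w=u_0^a-u_0^{\ti a}$ satisfies a linear divergence-form equation with a source built from $(a-\ti a)(u_0^{\ti a},\nabla u_0^{\ti a})$.  For small $|p_0|$ this source is \emph{small}, so elliptic estimates give $w$ small, not zero; unique continuation requires identical operators, and energy estimates with a nonzero source cannot force $w\equiv 0$.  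The matching of DN maps gives equality of the \emph{fluxes} $a\,\pd_\nu u_0^a=\ti a\,\pd_\nu u_0^{\ti a}$ on $\pd\Omega$, not of $\pd_\nu w$, so you do not have vanishing Cauchy data for $w$ either.

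\medskip
\textbf{What the paper does instead.}  It prescribes the one-jet $(u,\nabla u)$ at a chosen boundary point $x_0$ by building explicit log-type sub/super-solutions and applying the comparison principle; this produces, for every $(s,p)$ with $|p|<\Pi(s)$, a solution with $(u(x_0),\nabla u(x_0))=(s,p)$.  Linearizing there and reading off the principal symbol of the linearized DN map yields an algebraic system in $a,\nabla_p a$ evaluated at $(s,p)$; its special rank-one structure forces $a(s,p)=\ti a(s,p)$.  Rotational invariance then sweeps out all directions of $p$.  No Taylor-jet recovery, no CGO, and no comparison of two different background solutions is needed.
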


\medskip
We next recover two classes of $a(s,p)$ along large gradients.  The first follows immediately from Theorem \ref{thm:open}.

\begin{thm}\label{thm:analytic}
Let $a(s,p)$ and $\ti a(s,p)$ be real-analytic.  If $\Gamma_a=\Gamma_{\ti a}$, then $a(s,p)\equiv \ti a(s,p)$.
\end{thm}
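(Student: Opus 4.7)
The plan is to deduce Theorem \ref{thm:analytic} as an essentially immediate corollary of Theorem \ref{thm:open} together with the identity principle for real-analytic functions. The idea is that the domain $\re\times\re^n$ is connected, so two real-analytic functions on it that agree on any non-empty open subset must in fact coincide everywhere.

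First I would invoke Theorem \ref{thm:open} to conclude that
$$
a(s,p)=\ti a(s,p)\quad\text{for all } (s,p)\in U:=\{(s,p)\in\re\times\re^n:|p|<\Pi(s)\}.
$$
Next I would check that $U$ is non-empty and open. Openness is immediate since $\Pi$ is continuous and positive. To see non-emptiness, note that for any $s_0\in\re$ the point $(s_0,0)$ lies in $U$ because $\Pi(s_0)>0$; in fact $U$ contains the entire zero-section $\re\times\{0\}$.

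Finally, since $a$ and $\ti a$ are real-analytic on $\re\times\re^n$, so is the difference $h:=a-\ti a$. The set $U$ is a non-empty open subset of the connected real-analytic manifold $\re\times\re^n$, and $h$ vanishes on $U$, so all partial derivatives of $h$ vanish at any point of $U$. By the identity theorem for real-analytic functions on a connected domain, the set where all derivatives of $h$ vanish is both open and closed, hence equals all of $\re\times\re^n$. Therefore $h\equiv 0$, which is the desired conclusion.

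There is really no obstacle here beyond verifying that $U$ is non-empty and open; all the substantive work has been absorbed into Theorem \ref{thm:open}. The role of real-analyticity is solely to propagate equality from the small-gradient regime produced by the boundary-measurement argument to the full phase space.
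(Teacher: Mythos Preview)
Your proposal is correct and matches the paper's approach exactly: the paper simply states that Theorem~\ref{thm:analytic} ``follows immediately from Theorem~\ref{thm:open}'', and your argument spells out precisely this, invoking the identity theorem for real-analytic functions on the connected domain $\re\times\re^n$ once equality on the non-empty open set $U=\{|p|<\Pi(s)\}$ is known.
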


Theorem \ref{thm:analytic} extends the result of \cite{MU}, which assumed conditions on the holomorphic extension of real analytic $a(s,p)$ in order to solve a complex-valued BVP.  %Our approach will not need such solutions, so such conditions can be removed.  In order to obtain \textit{global} uniqueness for \textit{non-analytic} conductivities, however, additional conditions are now imposed.

\begin{thm}\label{thm:decay}
Let $a$ and $\ti a$ satisfy two additional conditions:

\medskip
1. Uniform elliptic lower bound:
\eqal{
\label{uni}
a_{ij}(s,p),\quad \ti a_{ij}(s,p)\ge \lambda_0>0,\qquad (s,p)\in\re\times\re^n,
}
where the inequality is in the quadratic form sense, and $\lambda_0$ is constant.

\medskip
2. Decay along large gradients:
\eqal{
\label{decay}
|a_s(s,p)|,\quad |\ti a_s(s,p)|\le  \frac{\ep_0}{|p|},\qquad (s,p)\in\re\times\re^n,
}
for some sufficiently small $\ep_0>0$ depending on $\Omega$ and $\lambda_0$.

\medskip
If $\Gamma_a=\Gamma_{\ti a}$, then $a(s,p)\equiv \ti a(s,p)$.
\end{thm}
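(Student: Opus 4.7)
The plan is to extend the equality $a\equiv\ti a$ from the open region in Theorem \ref{thm:open} to all of $\re\times\re^n$. I would proceed by constructing, for every target $(s_0,p_0)\in\re\times\re^n$, a solution $u$ whose boundary trace has prescribed $1$-jet at a chosen point, and then recover $a$ at that point from the linearization of $\Gamma_a$ there.

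First (probing solutions): Fix $(s_0,p_0)$ and $x_0\in\pd\Om$. Take boundary data of the form $f_\tau=s_0+p_0\cdot(x-x_0)+\tau g$ where $g$ is a smooth function on $\pd\Om$ vanishing together with its tangential derivative at $x_0$. Then $f_\tau(x_0)=s_0$ and the tangential part of $\nabla u_\tau(x_0)$ equals $p_0^{\mathrm{tan}}$ independently of $\tau$. The uniform ellipticity (\ref{uni}) yields global $C^{1,\alpha}$ estimates on $u_\tau$, and the decay condition (\ref{decay}) — which renders the reaction term $|a_s||\nabla u|^2\le\ep_0|\nabla u|$ sublinear — provides continuous dependence with constants uniform in $|p_0|$. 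An intermediate-value / implicit-function argument then selects $\tau$ so that $\pd_\nu u_\tau(x_0)=p_0\cdot\nu$, giving $\nabla u(x_0)=p_0$ exactly. The same construction applied to $\ti a$ produces $\ti u$ with $(\ti u(x_0),\nabla\ti u(x_0))=(s_0,p_0)$.

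Second (boundary determination of the linearization): The Fréchet derivative $D\Gamma_a(f_\tau)$ is the Dirichlet-to-Neumann map of the linearized equation at $u_\tau$, a linear divergence-form elliptic operator whose principal symbol is the matrix $a_{ij}(u,\nabla u)$ from (\ref{aij}). By the classical boundary determination for linear elliptic DN maps, this symmetric matrix is recovered pointwise on $\pd\Om$. The equality $\Gamma_a=\Gamma_{\ti a}$ implies equality of linearizations at $f_\tau$, hence at $x_0$ we obtain $a_{ij}(s_0,p_0)=\ti a_{ij}(s_0,p_0)$. Since $(s_0,p_0)$ was arbitrary, $a_{ij}\equiv\ti a_{ij}$ on all of $\re\times\re^n$.

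Third (inverting $a_{ij}\mapsto a$): Taking the trace in (\ref{aij}) gives $\mathrm{tr}(a_{ij})(s,p)=n\,a(s,p)+p\cdot\nabla_p a(s,p)$, and one checks $\frac{d}{dt}\bigl[t^n a(s,tp)\bigr]=t^{n-1}\mathrm{tr}(a_{ij})(s,tp)$; integrating from $0$ to $1$ yields
\[
a(s,p)=\int_0^1 t^{n-1}\,\mathrm{tr}(a_{ij})(s,tp)\,dt.
\]
The same formula for $\ti a$ then gives $a\equiv\ti a$.

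The main obstacle is the first step: for arbitrarily large $|p_0|$ we must construct solutions with prescribed boundary jet $(u(x_0),\nabla u(x_0))=(s_0,p_0)$ and establish the continuous-dependence and surjectivity estimate for $\tau\mapsto\pd_\nu u_\tau(x_0)$. This is exactly where (\ref{uni}) and the smallness of $\ep_0$ in (\ref{decay}) enter: uniform ellipticity gives $C^{1,\alpha}$ bounds independent of $|p_0|$, while the smallness of $\ep_0$ controls the nonlinear reaction term uniformly, so the implicit-function argument closes with constants independent of the size of the target gradient.
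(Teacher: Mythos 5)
There is a genuine gap, and it sits in your second step. You assert that ``by the classical boundary determination for linear elliptic DN maps'' the full symmetric matrix $a_{ij}(u,\nabla u)$ from \eqref{aij} is recovered pointwise on $\pd\Omega$, and your third step (the trace/scaling formula $a(s,p)=\int_0^1 t^{n-1}\,\mathrm{tr}(a_{ij})(s,tp)\,dt$, which is correct as an identity) needs exactly that full matrix, including the normal--normal entry. But the linearized problem here is \emph{anisotropic}, and boundary determination for anisotropic operators does not give the matrix pointwise: the principal symbol of the DN map at a boundary point determines only a tangential quantity (in the paper's normalization, $G|_{T^*\pd\Omega}$ for $n\ge 3$, and only the scalar $\det a_{ij}$ for $n=2$ because of conformal invariance; see Lemma \ref{lem:O1} and Proposition \ref{prop:bdy}), and the diffeomorphism gauge invariance of anisotropic conductivities precludes recovering the normal components at the boundary from local symbol data. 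The linearized operator also carries first-order terms (the $a_s v\,u_i$ term and the anti-symmetric part of $a_{p_j}v_j u_i$), which must be separated out; the paper does this through the magnetic Schr\"odinger reformulation and uses the imaginary part of the symbol to get the extra identity \eqref{aneq}, which is essential in two dimensions. Because only tangential information is available, the paper must choose jets with $\nabla u(x_0)$ tangent to $\pd\Omega$, solve the resulting algebraic system for $a-\ti a$ (an eigenvalue argument for $n\ge3$, and a determinant-plus-ODE argument for $n=2$), and then recover all directions of $p$ by applying Euclidean isometries to $\Omega$; none of this can be bypassed by citing pointwise boundary recovery of $a_{ij}$.

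Your first step is also thinner than it needs to be: the heart of the matter under hypotheses \eqref{uni} and \eqref{decay} is precisely the construction of solutions with \emph{arbitrary} prescribed boundary jet, and an intermediate-value argument in the single parameter $\tau$ gives nothing unless you can show the achievable normal derivatives sweep out an interval containing the target $p_0\cdot\nu$, with $|p_0|$ arbitrarily large. The paper does this in Proposition \ref{prop:decay} by exhibiting explicit exponential barriers $u^{s,p}(x)=s+p'\cdot x+h\,p_n(e^{x^n/h}-1)$, which are sub/super-solutions exactly because of the decay condition \eqref{decay}, and then invoking the comparison principle plus a connectedness argument; uniform $C^{1,\alpha}$ estimates and ``continuous dependence with constants uniform in $|p_0|$'' by themselves do not produce unboundedness of the range of $\tau\mapsto\pd_\nu u_\tau(x_0)$. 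Your closing formula in the third step is a nice alternative to the paper's inversion of the algebraic system, but it only becomes usable after the recovery of $a_{ij}$ is repaired along the lines above.
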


This seems to be the first example of global uniqueness of a non-analytic conductivity depending on $(u,\nabla u)$.  %Remark \ref{rem:x} holds in this case as well.

\medskip
Finally, as shown in Remarks \ref{rem:n3}, \ref{rem:bsym}, and \ref{rem:n2}, the above results hold for non-homogeneous conductivities $a=a(x,s,p)$ as well, provided the ``$x$-dependent part" is either known  (i.e. is the same for both $a$ and $\ti a$) or symmetric along a direction, say $(0,\dots,0,1)\in\re^n$.  %We assume $D$ in \eqref{smooth} has the natural meaning under this replacement, and 
We assume the known quantities in \eqref{aij}, \eqref{coer}, and \eqref{grow} %, and \eqref{smooth}
 are unchanged and do not depend on $x$, for simplicity.

\begin{cor}\label{cor:x}
If $n\ge 3$, suppose also $\Omega$ is convex and $a$ and $\ti a$ are of the form
\eqal{
a(x,s,p)=F(x,s,p,b(x^1,\dots,x^{n-1},s,p)),\\
\ti a(x,s,p)=F(x,s,p,\ti b(x^1,\dots,x^{n-1},s,p)),
%%a(x,s,p)&=A(x,s,p)+b(x^1,\dots,x^{n-1},s,p),\\
%%\ti a(x,s,p)&=A(x,s,p)+\ti b(x^1,\dots,x^{n-1},s,p).
}
where $t\mapsto F(x,s,p,t)$ is injective for each fixed $(x,s,p)$.  (If $n=2$, remove convexity of $\Omega$ and assume $b$ and $\ti b$ are functions of $(s,p)$ only).
%%If instead $n=2$, then instead suppose $a$ and $\ti a$ are of the form
%%\eqal{
%%a(x,s,p)^2&=A(x,s,p)+b(s,p),\\
%%\ti a(x,s,p)^2&=A(x,s,p)+\ti b(s,p).
%%}
%Note that a is strictly positive so the right hand side is as well, so the smoothness of a implies that of the square root of the right hand side.

\medskip
Then the statements of Theorems \ref{thm:open}, \ref{thm:analytic}, and \ref{thm:decay} are true with replacements of $(s,p)$ by $(x,s,p)$, with $x\in\Omega$.
\end{cor}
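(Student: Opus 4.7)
My strategy is to reduce the corollary to Theorems \ref{thm:open}--\ref{thm:decay} via a combination of boundary determination, the injectivity of $t\mapsto F(x,s,p,t)$, and the independence of $b$ from $x^n$. Once $a$ is determined at boundary points, injectivity of $F$ converts this into knowledge of $b$ on $\pd\Om$, and the structural dependence on only $(x^1,\dots,x^{n-1})$ then propagates this information into $\Om$.

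First, I would verify that the arguments underlying Theorems \ref{thm:open}, \ref{thm:analytic}, and \ref{thm:decay} localize so as to yield, in the non-homogeneous setting, $a(x,s,p)=\ti a(x,s,p)$ for $x\in\pd\Om$ and $(s,p)$ in the relevant set $S$: namely $S=\{|p|<\Pi(s)\}$ for the analogue of Theorem \ref{thm:open}, and $S=\re\times\re^n$ for the analogues of Theorem \ref{thm:analytic} (using real-analyticity in $(x,s,p)$) and Theorem \ref{thm:decay} (using the uniform ellipticity and gradient-decay hypotheses). Because boundary determination only probes coefficients via their jet at $\pd\Om$, the added $x$-dependence should factor through without changing the argument. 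Once this is in hand, injectivity of $t\mapsto F(x,s,p,t)$ at each $x\in\pd\Om$ gives
\eqal{
b(x^1,\dots,x^{n-1},s,p)=\ti b(x^1,\dots,x^{n-1},s,p),\qquad (s,p)\in S.
}

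Next, for $n\ge 3$ convexity of $\Om$ ensures that the projection of $\pd\Om$ onto the first $n-1$ coordinates equals the projection of $\ov\Om$: every fiber $\{y'\}\times\re\cap\ov\Om$ is a segment with endpoints on $\pd\Om$. Combined with the fact that $b$ and $\ti b$ are independent of $x^n$, this forces $b\equiv\ti b$ on the projection of $\ov\Om$, hence on $\Om\times S$. A final application of $F$ recovers $a\equiv\ti a$ on $\Om\times S$. For $n=2$, $b$ and $\ti b$ depend only on $(s,p)$, so the propagation step is trivial once we have the equality at a single boundary point; this is why the convexity assumption can be dropped.

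The main obstacle is the first step: checking that the boundary-determination machinery in the proofs of the three theorems is genuinely local in $x$ and survives the composition with $F$. In particular, for Theorem \ref{thm:decay}, the hypotheses \eqref{uni} and \eqref{decay} must be verified for the composed coefficient $F(x,s,p,b(x',s,p))$, and any oscillatory test functions or higher-order linearizations employed to reach $(s,p)$ of arbitrary magnitude must remain valid when $a$ depends on $x$. Assuming this locality---standard for boundary determination in quasilinear inverse problems---the remaining steps are essentially bookkeeping.
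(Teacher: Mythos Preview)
Your overall strategy---boundary determination at each $x_0\in\pd\Omega$, then injectivity of $F$, then the $x^n$-independence of $b$ to propagate into $\Omega$---matches the paper's (Remarks \ref{rem:n3}, \ref{rem:bsym}, \ref{rem:n2}). However, your Step 1 has a genuine gap that you flag but do not resolve.

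In the homogeneous proof of Theorem \ref{thm:open} for $n\ge 3$, the eigenvalue argument on the tangential $(n-1)\times(n-1)$ block only yields $a(s,p)=\ti a(s,p)$ for $p$ \emph{tangent} to $\pd\Omega$ at the chosen point; the extension to all $p$ with $|p|<\Pi(s)$ is accomplished by \emph{rotating} $\Omega$. When $a$ depends on $x$, rotation moves the boundary point, so this step does not ``localize'' in the way you assert. Saying ``the added $x$-dependence should factor through'' does not address this: the obstruction is not the $x$-variable in the boundary determination itself, but the global rotation used afterward to sweep out the $p$-sphere. The paper's fix (Remark \ref{rem:bsym}) is to prescribe an \emph{arbitrary} jet $(s,p)$ at $x_0$ with $|p|<\Pi(s)$ via Proposition \ref{prop:open}, run the eigenvalue argument on the tangential block to obtain $a(x_0,s,p',u_\nu)=\ti a(x_0,s,p',\ti u_\nu)$, and then use the equality $\Gamma_a(f)=\Gamma_{\ti a}(f)$ of the nonlinear DN maps to force $u_\nu=\ti u_\nu$, whence $a(x_0,s,p)=\ti a(x_0,s,p)$ for the full $p$. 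This two-step recovery of the normal component is the missing ingredient in your outline.

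A second, minor point: your invocation of convexity is misplaced. The identity $\pi_{\re^{n-1}}(\pd\Omega)=\pi_{\re^{n-1}}(\ov\Omega)$ holds for any bounded domain, since each vertical fiber meeting $\Omega$ has endpoints on $\pd\Omega$. The paper uses convexity in Remark \ref{rem:bsym} for a different purpose---to guarantee a supporting hyperplane at every boundary point, which is how it organizes the ``place $\Omega$ above a hyperplane'' step.
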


The fully general case where $b=b(x,s,p)$ is unclear to the author and may require new methods. %  Remark \ref{rem:bsym} contains a more general non-homogeneous extension for $n\ge 3$, but it requires $b$ symmetric in a direction.

\begin{comment}
\begin{thm}\label{thm:all}
Let $a$ and $\ti a$ satisfy one of the following cases:

\medskip
Case 1: $a$ and $\ti a$ are real analytic.

\medskip

Case 2: $a$ and $\ti a$ satisfy the decay condition
\eqal{
\label{decay}
|a_s|,|\tilde a_s|\le \ep(\Omega)\,\frac{\lambda(s,p)}{|p|},\quad (s,p)\in\re\times\re^n.
}

\medskip
Case 3: $a$ and $\tilde a$ are symmetric in one direction:
\eqal{
\label{tsym}
%a,\tilde a\in\{F:e\cdot\nabla_pF(s,p)=0\},\quad \text{for some }e\in S^{n-1},
a_{p_n}(s,p) = \ti a_{p_n}(s,p)=0.
}

%\medskip
%Case 4: $a$ and $\tilde a$ have rotation symmetry
%\eqal{
%\label{rsym}
%a,\ti a\in\{F:F=F(s,|p|)\}.
%}

\medskip
\noindent
If $\Gamma_a=\Gamma_{\ti a}$, then $a\equiv \ti a$.
\end{thm}

\end{comment}

\subsection{Discussion}
The classical Calder\'on problem considers an isotropic linear elliptic equation and its Dirichlet-to-Neumann map
\eqal{
\label{cald}
\nabla\cdot (a(x)\nabla u)=0,\\
\Gamma_a:u|_{\pd\Omega}\mapsto a(x)\frac{\pd u}{\pd \nu}\Big|_{\pd\Omega}.
}
The problem is to determine $a(x)$ from $\Gamma_a$.  See \cite{U} for a survey of this problem and its extensions.  The matrix extension $a(x)=a_{ij}(x)$, or the anisotropic Calder\'on problem, is still unsolved in general for dimension three or higher.

\medskip
Recent work has featured the quasilinear generalization $a=a(x,u,\nabla u)$.  The case $a=a(x,u)$ was considered in \cite{S1}, using a linearization technique similar to that originally introduced in \cite{I1}.  This was extended in \cite{SU} to the anisotropic case $a=a_{ij}(x,u)$ (matrix).  See \cite{HS} for analysis in the case $a=a_{ij}(x,\nabla u)$ in dimension two, and \cite{KN} for the recovery of a Taylor polynomial of $a=a_{ij}(x,\nabla u)$.  The linearization technique was also applied to the semilinear equations $\Delta u+a(x,u)=0$ and $\Delta u+a(x,\nabla u)=0$ by \cite{IS} and \cite{S2}, respectively.  More recently, a multilinearization technique was developed for $\Delta_gu+a(x,u)=0$ in \cite{FO,KU,LLLS}, where $a$ is analytic in $u$.  The equation $\Delta u+f(x)|\nabla u|^2+V(x,u)=0$ with $V$ analytic was determined in the partial data case in \cite{KU1} using multilinearization.  %See also \cite{GKS} for an inverse problem for the $p$-Laplace type equation using a different method.
In \cite{I2}, the equation $\Delta u+c(u,\nabla u)=0$ was considered using singular solutions of the linearized equation which concentrate at the boundary.  The gradient structure was used for the quasilinear equation $\nabla\cdot(a(u)\nabla u)+c(x)u=0$ with lower order term in \cite{EPS}.  

\medskip
In the work \cite{MU} by Mu\~noz and Uhlmann, attention was brought to the quasilinear case $a=a(u,\nabla u)$, and results were obtained assuming conditions on the holomorphic extension of $a$.  These conditions ensure that linearization is possible around complex-valued affine solutions.  The question is raised whether we can allow for \textit{arbitrary} $a(s,p)$ real analytic, $a(s,p)$ smooth but \textit{non-analytic}, and \textit{non-homogeneous} $\pd a(x,s,p)/\pd x\neq 0$.

\medskip
The purpose of this work is to extend the results of \cite{MU} to several new classes of quasilinear conductivities:

\medskip
1. $a(u,\nabla u)$ real analytic, without additional hypotheses, Theorems \ref{thm:open} and \ref{thm:analytic}.

\medskip
2. $a(u,\nabla u)$ smooth, assuming additional conditions \eqref{uni} and \eqref{decay}, Theorem \ref{thm:decay}.% or symmetry \eqref{tsym}.
%Not clear how to construct counterexample to decay, since such examples will be non-analytic.

\medskip
3. Non-homogeneous cases $a=a(x,u,\nabla u)$ with known $x$ part, Corollary \ref{cor:x}.

\medskip
An apparent \textit{a priori} difficulty of the inverse problem for $a=a(u,\nabla u)$ (and truly one for general $a=a(x,u,\nabla u)$) is that the linearized inverse problem is anisotropic.  Not only is the anisotropic problem not understood in dimension three or higher, but the matrix $a_{ij}(x)$ can only be recovered up to pushforward by a boundary-preserving diffeomorphism; see \cite{SU} and \cite{S2} for similar difficulties.  In the $p$-Laplace case $a=A(x)|\nabla u|^{p-2}\nabla u$, the ``nonlinear part" of the quasilinearity is known, unlike for equation \eqref{eq}, and boundary determination was possible in \cite{SX} using explicit Wolff-type oscillatory solutions, with full recovery using a monotonicity identity \cite{GKS}.  If instead the anisotropic and nonlinear part is small, then one may linearize near the zero solution as in \cite{KN} and recover the $x$-dependent Taylor coefficients using linear theory.  This is similar to the idea of the multilinearization approach \cite{FO,KU,KU1,LLLS} for which dependence on $u$ or $\nabla u$ is analytic.  Constant solutions were used for linearizing the conductivity equation in \cite{SU}, and complex-valued affine solutions were used for linearization in \cite{MU}.  %However, it was observed in \cite{GKS} that constants are not sufficient in general for quasilinear equations with nonlinear gradient.  
However, it is not clear what non-constant, explicit solutions quasilinear equation \eqref{eq} has in full generality.  If $a(s,p)$ is smooth but not real analytic, then complex-valued affine solutions no longer make sense.

\medskip
Our approach is to use classical boundary determination theory, which is well understood even in the anisotropic case.  We first linearize the equation around any solution with a prescribed boundary jet, then recover the tangential part of the linearized conductivity matrix \eqref{aij} using boundary determination theory.  Next, the isotropic structure of the linearized conductivity matrix yields a solvable algebraic system for the conductivity evaluated at the jet.  
%reduce the anisotropic linearized conductivity matrix to an isotropic one, thereby solving the inverse problem at the jet
%then solve an overdetermined system for the conductivity evaluated at the jet.  
According to the comparison principle, we can use logarithmic barriers to prescribe boundary jets on a small open subset, which will complete the proof of Theorem \ref{thm:open}.   For Theorem \ref{thm:decay}, decay condition \eqref{decay} ensures that exponential barriers exist, hence solutions with any prescribed boundary jet in $\re\times\re^n$.  %Uniform ellipticity \eqref{uni} ensures the linearized BVP is wellposed even for large gradients.

\medskip
This work is organized as follows.  In Section \ref{sec:lin}, we show that the linearized BVP is wellposed and has a well-defined DN map.  In Section \ref{sec:jet}, we use the comparison principle to construct solutions with prescribed boundary jets.  In Section \ref{sec:geo}, we geometrically reformulate the linearized problem, which allows us to invoke known boundary determination theory in Section \ref{sec:bdy}.  In Section \ref{sec:proof}, we complete the proofs of Theorems \ref{thm:open} and \ref{thm:decay}.

\section{Preliminaries}

%\subsection{Estimates for the Dirichlet problem}
%To linearize the Dirichlet problem \eqref{eq} at a given solution $u$, we need solutions to be stable under small perturbations of $f$.  Moreover, we need flat boundary data to yield flat solutions, since the linearization may not apply for solutions with large gradients.  This section collects standard estimates in \cite{GT} with emphasis on the dependence of constants on $C^k$ norms of $a$ and the solution.  Some of this material appears in \cite{MU}.

\subsection{The linearized problem}
\label{sec:lin}

For arbitrary solutions to the quasilinear equation \eqref{eq}, the linearized Dirichlet problem centered at that solution may not be solvable.  Under a small gradient assumption or the decay assumption \eqref{decay}, we will show the uniqueness of weak solutions to the linearized equation, and standard techniques will again apply after invoking higher regularity.  Note that the non-homogeneous case $a=a(x,u,\nabla u)$ applies here without change as well.

\medskip
We denote by $u=u[f]$ the solution of the BVP \eqref{eq}.  If $f,h\in C^\infty(\pd\Omega)$, then formally, we let 
\eqal{
\label{vdef}
v(x):=\frac{d}{dt}\Big|_{t=0}u[f+t h](x)=\lim_{t\to 0}\frac{u[f+th](x)-u[f](x)}{t}
}
solve the linearized Dirichlet problem
\eqal{\label{eqL}
\pd_i(a_svu_i+a_{p_j}v_ju_i+av_i)&=0,\\
v|_{\pd\Omega}=h.
}
The linearized DN map:
\eqal{\label{DNL}
\Gamma_a[u](h):=\frac{d}{dt}\Big|_{t=0}\Gamma_a(f+t h)=(a_sv\frac{\pd u}{\pd \nu}+a_{p_j}v_j\frac{\pd u}{\pd \nu}+a\frac{\pd v}{\pd \nu})dS.
}

\medskip
For brevity, we say that \textit{the linearization of \eqref{eq} exists at $u$} if the limit \eqref{vdef} is well defined in $C^{1}(\Omega)$ and the Dirichlet problem \eqref{eqL} for $v$ is uniquely solvable in $W^{1,2}(\Omega)$.  If $\Gamma_a=\Gamma_{\ti a}$ and the linearization exists at $u$ and $\ti u$, it then follows that $\Gamma_a[u]=\Gamma_{\ti a}[\ti u]$.%  The Dirichlet problem then has DN map \eqref{DNL}.

\medskip
To show that the linearization of \eqref{eq} exists, we first specialize to solutions $u$ with small gradients, under no additional assumptions on $a(s,p)$.

\begin{prop}
\label{prop:lin_small}
Given $s\in\re$, there exists $\Pi_1>0$ sufficiently small such that if $f\in C^\infty(\pd\Omega)$ satisfies
$$
\|f-s\|_{C^{2,\al}(\pd\Omega)}\le \Pi_1,
$$ 
then the linearization of \eqref{eq} exists at $u=u[f]$.  Here, $\al\in(0,1)$ is as in \cite[Theorem 2.10]{MU} and depends on the known quantities and $s$.
\end{prop}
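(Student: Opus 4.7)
The plan is to invoke the implicit function theorem in H\"older spaces, anchored at the constant reference solution $u\equiv s$. Note that $u\equiv s$ solves \eqref{eq} with $f\equiv s$ trivially, since $\nabla u=0$ makes the current $a(u,\nabla u)\nabla u$ identically zero. At this reference solution the linearized operator in \eqref{eqL} degenerates drastically: every term carrying a factor $u_i$ drops out, and \eqref{eqL} collapses to
\[
a(s,0)\,\Delta v = 0.
\]
Since $a(s,0)\ge 1$, this constant-coefficient Dirichlet problem is uniquely solvable in any standard function space, and in particular the map $v\mapsto (a(s,0)\Delta v,\,v|_{\pd\Om})$ is an isomorphism $C^{2,\al}(\ov\Om)\to C^{\al}(\ov\Om)\times C^{2,\al}(\pd\Om)$.

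Next, consider the Fr\'echet map
\[
\mc F\colon C^{2,\al}(\ov\Om) \longrightarrow C^{\al}(\ov\Om)\times C^{2,\al}(\pd\Om),
\qquad
\mc F(u) = \bigl(\nabla\cdot(a(u,\nabla u)\nabla u),\; u|_{\pd\Om}\bigr).
\]
Smoothness of $a(s,p)$ makes $\mc F$ continuously Fr\'echet differentiable, and its derivative at $u\equiv s$ is exactly the isomorphism identified above. The implicit function theorem then supplies $\Pi_1>0$ and a $C^1$ local solution branch $f\mapsto u[f]$ defined on $\{\|f-s\|_{C^{2,\al}(\pd\Om)}\le \Pi_1\}$, with $\|u[f]-s\|_{C^{2,\al}(\ov\Om)}\to 0$ as $f\to s$. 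By uniqueness this coincides with the BVP solution provided by \cite[Theorem 2.10]{MU}. The Fr\'echet derivative of this branch in the direction $h$ is precisely the limit \eqref{vdef}, which therefore exists in $C^{2,\al}(\ov\Om)$, hence in $C^{1}(\Om)$; differentiating the identity $\mc F(u[f])=(0,f)$ shows that this limit solves \eqref{eqL} with $v|_{\pd\Om}=h$.

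It remains to verify unique $W^{1,2}$-solvability of \eqref{eqL}. After shrinking $\Pi_1$ if needed, the quadratic form $a_{ij}(u,\nabla u)\xi_i\xi_j$ of the principal part stays close to $a(s,0)|\xi|^2$, hence uniformly elliptic, while the first- and zeroth-order coefficients $a_{p_j}u_i$ and $a_s u_i$ are each controlled by $|\nabla u|$ and so are arbitrarily small. A standard energy estimate on the homogeneous problem, absorbing the small lower-order terms with Cauchy--Schwarz and Poincar\'e, gives uniqueness in $W^{1,2}$; existence then follows from the Fredholm alternative (or directly from the $C^{2,\al}$ solvability already in hand).

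The main obstacle is keeping the ellipticity and Schauder constants uniform as $f$ varies over the small $C^{2,\al}$ ball. This is exactly what the quasilinear Schauder estimate of \cite[Theorem 2.10]{MU} supplies, and the argument is tractable here precisely because we linearize around the constant solution $u\equiv s$, where every lower-order coefficient in \eqref{eqL} vanishes to leading order in $\|u-s\|_{C^{1}}$.
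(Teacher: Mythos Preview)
Your proposal is correct and takes a genuinely different route from the paper. The paper proceeds by writing down the difference quotient $v(\cdot;t)=(u[f+th]-u[f])/t$, deriving the $t$-dependent linear equation \eqref{probt} it satisfies, and then establishing uniform-in-$t$ $W^{1,2}$ and higher-order bounds directly via the energy estimate (the same Cauchy--Schwarz/Poincar\'e argument you sketch) together with Schauder theory; convergence $v(\cdot;t)\to v$ in $C^1$ is then obtained by a second compactness/stability step. You instead package all of this into a single application of the implicit function theorem to $\mc F(u)=(\nabla\cdot(a(u,\nabla u)\nabla u),\,u|_{\pd\Om})$ at the constant solution $u\equiv s$, where the linearization is the trivially invertible $a(s,0)\Delta$. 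This is cleaner: Fr\'echet differentiability of $f\mapsto u[f]$ in $C^{2,\al}$ comes for free, and the limit \eqref{vdef} is identified with the derivative without any uniform-in-$t$ bookkeeping. The paper's hands-on argument, on the other hand, makes the dependence of $\Pi_1$ on $\lambda_0,\mu_0,s,\Omega$ more transparent and avoids having to verify that the superposition operator $u\mapsto a(u,\nabla u)\nabla u$ is $C^1$ between H\"older spaces (standard, but a point you glossed over). Both approaches still require the separate energy estimate for $W^{1,2}$ uniqueness of \eqref{eqL}, which you correctly isolate.
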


To linearize \textit{near} constant solutions, we need to verify the solution operator is Lipschitz continuous \textit{at} constants.
\begin{lem}
Given $k\in\re$ and $h\in C^{2,\al}(\pd\Omega)$, there exists $\al\in(0,1)$ sufficiently small such that the following estimate holds:
\eqal{
\label{C2ak}
\|u[k+h]-k\|_{C^{2,\al}}\le C(k,\|h\|_{C^2})\|h\|_{C^{2,\al}}.
}
\end{lem}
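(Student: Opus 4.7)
The plan is to exploit the fact that constants solve \eqref{eq} trivially, so $w:=u[k+h]-k$ satisfies the same divergence-form quasilinear equation (with shifted conductivity $(s,p)\mapsto a(k+s,p)$) and boundary data $h$. The strategy is, after controlling $\|w\|_{C^{1,\al}}$, to rewrite $w$ as the solution of a \textit{linear, homogeneous} elliptic Dirichlet problem with $C^\al$ coefficients, and then extract the linear dependence on $\|h\|_{C^{2,\al}}$ directly from the standard boundary Schauder estimate.

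First, I would use the comparison principle with the two constant barriers $\pm\|h\|_{L^\infty}$ (both trivially solve the shifted quasilinear equation) to conclude
$$
\|w\|_{L^\infty(\Om)}\le \|h\|_{L^\infty(\pd\Om)}.
$$
Next, using the ellipticity, coercivity, and growth hypotheses \eqref{aij}--\eqref{grow}, together with a boundary gradient bound via logarithmic barriers and the classical interior Hölder gradient estimate (as packaged in the quasilinear regularity result \cite[Theorem 2.10]{MU} and in \cite{GT}), I would obtain
$$
\|w\|_{C^{1,\al}(\ov\Om)}\le C(k,\|h\|_{C^2}),
$$
for some $\al\in(0,1)$ depending on the known quantities and $k$.

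Second, I would expand the divergence-form equation for $w$ pointwise as
$$
a_{ij}(k+w,\nabla w)\,w_{ij}+a_s(k+w,\nabla w)\,|\nabla w|^2=0,
$$
with $a_{ij}$ the symmetric matrix from \eqref{aij}. Since $w\in C^{1,\al}(\ov\Om)$, the frozen coefficients
$$
A_{ij}(x):=a_{ij}\bigl(k+w(x),\nabla w(x)\bigr),\qquad b_i(x):=a_s\bigl(k+w(x),\nabla w(x)\bigr)\,w_i(x)
$$
lie in $C^\al(\ov\Om)$ with norms controlled by $C(k,\|h\|_{C^2})$, and $A_{ij}$ is uniformly elliptic by \eqref{aij}. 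Thus $w$ is the unique $C^{2,\al}$ solution of the \textit{linear, homogeneous} Dirichlet problem
$$
A_{ij}(x)\,w_{ij}+b_i(x)\,w_i=0\quad\text{in }\Om,\qquad w|_{\pd\Om}=h.
$$

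Finally, applying the classical boundary Schauder estimate \cite[Theorem 6.6]{GT} to this linear problem gives
$$
\|w\|_{C^{2,\al}(\ov\Om)}\le C\,\|h\|_{C^{2,\al}(\pd\Om)},
$$
where $C$ depends only on $\Om$, the ellipticity constant, and the $C^\al$ norms of $A_{ij},b_i$, and thus only on $k$ and $\|h\|_{C^2}$. This is the required estimate \eqref{C2ak}. The main obstacle is the quasilinear $C^{1,\al}$ bound depending only on $k$ and $\|h\|_{C^2}$; once that is secured, the key point is that the effective equation for $w$ is source-free and linear, which is precisely what makes the Schauder step produce linear (rather than affine) scaling in $\|h\|_{C^{2,\al}}$.
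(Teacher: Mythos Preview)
Your proposal is correct and follows essentially the same route as the paper: reduce to $w=u[k+h]-k$ solving the shifted quasilinear equation with boundary data $h$, invoke the $C^{1,\al}$ a priori estimate from \cite{MU}, freeze the coefficients of the nondivergence form to obtain a linear homogeneous equation with $C^\al$ coefficients, and apply the Schauder estimate \cite[Theorem 6.6]{GT}. Your explicit comparison-principle step for the $L^\infty$ bound is a helpful addition (it absorbs the $\|w\|_{L^\infty}$ term that Theorem~6.6 produces), but otherwise the arguments coincide.
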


\begin{proof}
Letting $f=k+h$ and $u=k+w$ in \eqref{eq}, we obtain the BVP
\eqal{
\label{eqt}
\nabla\cdot\left(a(k+w,\nabla w)\nabla w\right)=0,\\
w|_{\pd\Omega}=h.
}
The structural conditions \eqref{aij}, \eqref{coer}, and \eqref{grow} are still satisfied, with lower and upper bounds now also depending on a parameter $k$.  It was shown in \cite{MU} that \eqref{eq} has an \textit{a priori} $C^{1,\al}$ estimate in terms of $\|f\|_{C^2}$ for some $\al\in(0,1)$ depending on $a$ and $\|f\|_{C^2}$, which means each coefficient in the nondivergence form of \eqref{eqt},
\eqal{
a_{ij}(k+w,\nabla w)w_{ij}+(a_w(k+w,\nabla w)w_i)w_i=0,
}
has a $C^\al$ estimate also in terms of $\|f\|_{C^2}$.  Therefore, the linear Schauder $C^{2,\al}$ estimate \cite[Theorem 6.6]{GT} yields
\eqal{
\label{C2a}
\|u\|_{C^{2,\al}(\Omega)}\le C(a,\Omega,\|f\|_{C^2(\pd\Omega)})\|f\|_{C^{2,\al}(\pd\Omega)}.
}
%Note that by a partition of unity procedure, taking local coordinates on the boundary, we can find an extension of C2a boundary data into the interior which has an equivalent C2a norm.
In the case of \eqref{eqt}, the same estimate is therefore true, with $C(a,\Omega,\|f\|)$ replaced by $C(k,a,\Omega,\|h\|)=:C(k,\|h\|)$, and $\al$ depending also on $k$ and $\|h\|_{C^2}$.  Substitution of $w=u-k$ into the above completes the proof.
\end{proof}

We also need the continuity of the solution operator near \textit{any} small solution, which we record for completeness.  Given $M>0$ and the sufficiently small $\al=\al(M)$ of \eqref{C2a}, we give the subset $B_M=\{f\in C^{2,\al}(\pd\Omega):\|f\|_{C^2(\pd\Omega)}< M\}$ the $C^{2,\al}(\pd\Omega)$ norm.

\begin{lem}
\label{lem:cont}
For each $M>0$ and $\beta\in(0,\al(M))$, the operator $B_M\to C^{2,\beta}(\Omega)$ given by $f\mapsto u[f]$ is continuous.
\end{lem}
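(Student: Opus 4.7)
The plan is to combine the uniform Schauder-type estimate \eqref{C2a} on $B_M$ with compactness of Hölder embeddings and wellposedness of \eqref{eq}, via the standard subsequence-extraction argument. Fix $f_0\in B_M$ and let $f_k\to f_0$ in $C^{2,\al}(\pd\Omega)$, where $\al=\al(M)$ is the exponent appearing in \eqref{C2a}. Since $B_M$ is open in the $C^2$ norm, we may assume $f_k\in B_M$ for all $k$. The uniform bound $\|f_k\|_{C^2(\pd\Om)}<M$ and \eqref{C2a} then give $\|u[f_k]\|_{C^{2,\al}(\ov\Om)}\le C(M)$ independently of $k$.

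Next, since $\beta<\al$, the embedding $C^{2,\al}(\ov\Om)\hookrightarrow C^{2,\beta}(\ov\Om)$ is compact by Arzelà--Ascoli, so passing to a subsequence $u[f_{k_j}]\to v$ in $C^{2,\beta}(\ov\Om)$ for some $v\in C^{2,\al}(\ov\Om)$. Because $a\in C^\infty$ and the convergence holds in $C^2$, the coefficients $a_{ij}(u[f_{k_j}],\nabla u[f_{k_j}])$ and $a_s(u[f_{k_j}],\nabla u[f_{k_j}])$, together with the second derivatives of $u[f_{k_j}]$, converge uniformly. Passing to the limit in the nondivergence form of \eqref{eq} shows that $v$ solves \eqref{eq} with boundary trace $f_0$.

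By the standing wellposedness assumption for \eqref{eq} (see \cite{MU}, \cite{GT}), the Dirichlet problem has a unique smooth solution, so $v=u[f_0]$. Since every subsequence of $\{u[f_k]\}$ admits a further subsequence converging in $C^{2,\beta}(\ov\Om)$ to the same limit $u[f_0]$, the full sequence converges to $u[f_0]$ in $C^{2,\beta}(\ov\Om)$, establishing continuity at $f_0$.

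The individual ingredients are classical, so I expect no serious obstacle. The one bookkeeping point deserving attention is that the Hölder exponent be chosen uniformly over all of $B_M$ (rather than varying with each individual $\|f_k\|_{C^2}$); this is exactly the content of the $\al=\al(M)$ clause in \eqref{C2a}, which is why the $C^{2,\al}$ estimate can be invoked simultaneously on the entire sequence.
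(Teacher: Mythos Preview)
Your proof is correct and follows essentially the same approach as the paper: uniform $C^{2,\al}$ bounds from \eqref{C2a}, compactness of the embedding $C^{2,\al}\hookrightarrow C^{2,\beta}$, passage to the limit in the equation, and uniqueness of solutions. The only cosmetic differences are that the paper frames the argument by contradiction rather than via the ``every subsequence has a further subsequence'' device, and it cites the comparison principle \cite[Theorem 10.7]{GT} explicitly for uniqueness rather than appealing to general wellposedness.
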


\begin{proof}
If not, then there exists a sequence $f_k\in C^{2,\al}(\pd\Omega)$ with $\|f_k\|_{C^2}\le M$ converging to $f\in C^{2,\al}(\pd\Omega)$ but with $\|u[f_k]-u[f]\|_{C^{2,\beta}(\Omega)}\ge 1$, say.  By \eqref{C2a}, the solutions $u[f_k]$ are uniformly bounded in $C^{2,\al}(\ov\Omega)$, so the Arzela-Ascoli theorem yields a subsequence, also named $u[f_k]$, which converges in $C^{2,\beta}(\ov\Omega)$ to some $v$.  It follows that $\|v-u[f]\|_{C^{2,\beta}(\Omega)}\ge 1$.  However, since each $u[f_k]$ solves \eqref{eq} with boundary data $f_k$, sending $k\to\infty$ in \eqref{eq} shows that $v$ solves \eqref{eq} as well with boundary data $f$.  By the comparison principle \cite[Theorem 10.7]{GT} for quasilinear divergence form operators, it follows that $C^{2,\beta}$ solutions are unique, hence $v=u[f]$, a contradiction.
%Letting $f_k\to \phi$ in $C^{2,\al}(\pd\Omega)$,  estimate \eqref{C2a} shows that the solutions $u[f_k]$ are uniformly bounded in $C^{2,\al}$ with respect to $k$.  By the Arzela-Ascoli Theorem, a subsequence converges in $C^2$ to some $u\in C^2$ with $u|_{\pd\Omega}=f$.  Sending $k\to\infty$ in \eqref{eq} shows that $u\in C^2$ solves \eqref{eq} with boundary data $f\in C^{2,\al}$.  By the comparison principle for divergence form operators \cite[Theorem 10.7]{GT}, it follows that $u=u[f]\in C^{2,\al}(\Omega)$.
\end{proof}

\begin{proof}[Proof of Proposition \ref{prop:lin_small}]
Defining $t$-dependent quantities,
\eqal{
u(x;t):=u[f+th](x),\quad v(x;t):=\frac{u(x;t)-u(x;0)}{t},\\
a(x;t):=a[u(x;t)],\quad u(;\sigma;t)=u(;0)+\sigma(u(;t)-u(;0)),
}
with $a[u]:=a(u,\nabla u)$, we take a ``discrete derivative":
\eqal{
a(;t)u_i(;t)-a(;0)u_i(;0)&=ta(;t)v_i(;t)+(a(;t)-a(;0))u_i(;0)\\
&=tav_i+ta_s(;t)v(;t)u_i+ta_{p_j}(;t)v_j(;t)u_i,
}
where, by the fundamental theorem of calculus,
\eqal{
a_s(;t):=\int_0^1a_s[u(;\sigma;t)]d\sigma,\quad a_{p_j}(;t):=\int_0^1a_{p_j}[u(;\sigma;t)]d\sigma.
}
We see that $v(;t)$ then solves the Dirichlet problem
\eqal{
\label{probt}
\pd_i\left(a_s(;t)v(;t)u_i(;0)+a_{p_j}(;t)v_j(;t)u_i(;0)+a(;t)v_i(;t)\right)=0,\\
v(;t)|_{\pd\Omega}=h,
}
with associated Cauchy data (DN map)
\eqal{
\left(a_s(;t)v(;t)\frac{\pd u}{\pd \nu}(;0)+a_{p_j}(;t)v_j(;t)\frac{\pd u}{\pd\nu}(;0)+a(;t)\frac{\pd v}{\pd\nu}(;t)\right)dS.
}

\medskip
\textbf{Step 1:}

We verify the Dirichlet problem \eqref{probt} for $v(;t)$ has a unique weak solution in $W^{1,2}(\Omega)$ which is uniformly bounded in $t$.  Let $w\in W^{1,2}_0$ solve
\eqal{
\label{eqLw}
\pd_i\left(a_s(;t)wu_i(;0)+a_{p_j}(;t)w_ju_i(;0)+a(;t)w_i\right)=0.
}
Multiplying \eqref{eqLw} by $w$ and integrating by parts, we obtain
\eqal{
\int_\Omega a_{ij}(;t)w_iw_j\,dx=-\int_\Omega a_s(;t)u_i(;0)ww_i\,dx,
}
where we defined
\eqal{
a_{ij}(;t):=a(;t)\delta_{ij}+\frac{1}{2}\int_0^1 u_i(;0)a_{p_j}[u(;\sigma;t)]+u_j(;0)a_{p_i}[u(;\sigma;t)]\,d\sigma.
}
For $|t|\le 1$, estimate \eqref{C2a} shows $u(;t)$ is uniformly bounded in $C^{2,\al}$ with respect to $t$, for $\al$ sufficiently small.  Moreover, Lemma \ref{lem:cont} shows that $t\mapsto u(;t)$ is continuous in $C^{2,\beta}$ if $\beta\in(0,\al)$, so by the smoothness of $a$, we can write $a_{ij}(;t)=a_{ij}(;0)+r_1(t)$, where remainder $r_1(t)$ vanishes in $C^{1,\beta}$ as $t\to 0$.  By \eqref{grow}, we can therefore suppose $t$ is sufficiently small so that
\eqal{\label{aijbound}
a_{ij}(;t)\xi_i\xi_j\ge \frac{1}{2}\lambda_0(|u(;0)|)|\xi|^2,\qquad \xi\in\re^n.
}
Similarly, we write $a_s(;t)=a_s(;0)+r_2(t)$, and recalling \eqref{grow}, we suppose $t$ is sufficiently small so that
\eqal{\label{asbound}
|a_s(;t)|\le 2\mu_0(|u(;0)|).
}
The Cauchy-Schwarz inequality then yields
\eqal{
\label{ineq4}
\int_\Omega\lambda_0(|u(;0)|)|Dw|^2\,dx\le 4\int_\Omega \frac{\mu_0^2}{\lambda_0}(|u(;0)|)|Du(;0)|^2w^2\,dx
}
Since $Du=D(u-s)$, the estimate \eqref{C2ak} implies the gradient is small:
\eqal{
\|Du(;0)\|_{L^\infty}\le C(s)\|f-s\|_{C^{2,\al}}\le C(s)\Pi_1.
}
If $\Pi_1\le 1$, then $\|f\|_{C^{2,\al}}\le |s|+C(s)$, and $C^{2,\al}$ estimate \eqref{C2a} yields
\eqal{
\|u(;0)\|_{L^\infty}\le C(s),\quad \lambda_0(|u(;0)|)\ge \lambda_0(C(s)),\quad \mu_0(|u(;0)|)\le \mu_0(C(s)),
}
for another $C(s)$.  Substituting these and the Poincar\'e inequality for $w\in W^{1,2}_0(\Omega)$ into \eqref{ineq4} yields
\eqal{
\int_\Omega|Dw|^2\,dx\le \frac{\mu_0^2}{\lambda_0^2}(C(s))\cdot C(s,\Omega)\cdot\Pi_1^2\int_\Omega|Dw|^2\,dx.
}
Choosing $\Pi_1$ small depending on $s,\Omega$ confirms that $w=0$.  The Fredholm alternative \cite[Theorem 5.3]{GT} shows that Dirichlet problem \eqref{probt} has a unique solution $v(;t)\in W^{1,2}(\Omega)$, and \cite[Corollary 8.7]{GT} yields the $W^{1,2}$ estimate
\eqal{
\|v(;t)\|_{W^{1,2}(\Omega)}\le C(\eqref{probt},\Omega)\|h\|_{W^{1,2}(\Omega)},
}
where $C(\eqref{probt},\Omega)$ indicates (uniform) dependence on the coefficients of the differential operator in \eqref{probt}.  For small $t$, the coefficients and their derivatives have controlled $t$ dependence in the $C^\beta$ norm, so we conclude that $v(;t)$ has a uniform-in-$t$ $W^{1,2}$ bound.  Moreover, since $f,h\in C^\infty$, standard Schauder bootstrapping yields higher regularity $C^{k,\al}(\pd\Omega)\to C^{k,\al}(\Omega)$ estimates for $u(;t)$ as in \eqref{C2a}, so the $t$ dependence of the coefficients in \eqref{probt} is uniform in any $C^k$ norm.   Therefore, the higher regularity $W^{k+2,2}(\Omega)$ estimate \cite[Theorem 8.13]{GT} combined with Sobolev embeddings yield uniform-in-$t$ estimates for $v(;t)$ in any $C^\ell$ norm.

\medskip
\textbf{Step 2:}

We now show $v(;t)$ converges to $v$ in $C^1$, where $v$ is the unique solution of Dirichlet problem \eqref{eqL}.  Note that the unique solvability for $v$ is similar to that for $v(;t)$ in Step 1, so we omit the proof.  In \eqref{probt}, as before, we split 
\eqal{
a_s(;t)=a_s(;0)+r_2(t),\quad a_{p_j}(;t)=a_{p_j}(;0)+r_3(t),\quad a(;t)=a(;0)+r_4(t),
}
where each $r_i(t)$ vanishes in $C^{1,\beta}(\Omega)$ as $t\to 0$.  Thus, omitting $t=0$ arguments, $v(;t)$ solves the inhomogeneous equation
\eqal{
\pd_i(a_sv(;t)u_i+a_{p_j}v_j(;t)u_i+av_i(;t))=o(1)_{C^{\beta}(\Omega)},
}
where $o(1)_{C^\beta(\Omega)}$ indicates a quantity vanishing as $t\to 0$ in $C^\beta(\Omega)$, and its uniformity in $t$ with respect to terms including $v$ or $v_i$ follows from Step 1.  We then put $w(;t)=v(;t)-v$ and see that $w(;t)$ uniquely solves the Dirichlet problem
\eqal{
\pd_i(a_sw(;t)u_i+a_{p_j}w(;t)_ju_i+aw(;t)_i)=o(1)_{C^{\beta}(\Omega)},\\
w(;t)|_{\pd\Omega}=0.
}
The right hand side clearly vanishes in $L^p$ for any $p>1$, so it follows from the strong $W^{2,p}$ estimate \cite[Theorem 9.13]{GT} that $\|w(;t)\|_{W^{2,p}(\Omega)}\to 0$ as $t\to 0$.  By Sobolev embedding, we deduce that $v(;t)\to v$ in $C^1$, as desired.
\end{proof}

We next linearize around solutions $u$ with large gradients if we assume that $a(s,p)$ satisfies additional conditions.

\begin{prop}
Let $a$ satisfy uniform lower bound \eqref{uni} and  decay condition \eqref{decay}.  Then for any solution $u$ to \eqref{eq} with smooth boundary data $f$, the linearization of \eqref{eq} exists at $u$.  
\end{prop}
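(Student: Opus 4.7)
The plan is to follow the two-step strategy from the proof of Proposition~\ref{prop:lin_small}, replacing the smallness of $\|\nabla u\|_{L^\infty}$ — which closed the energy estimate there via \eqref{C2ak} — by the pointwise bound $|a_s\,\nabla u| \le \ep_0$ furnished directly by \eqref{decay}. Combined with the uniform ellipticity \eqref{uni}, which replaces the $u$-dependent constant $\lambda_0(|u|)$ of \eqref{coer} by a true constant, this allows the estimate to close for \emph{any} smooth boundary datum $f$, not merely those near a constant.

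For Step~1 (unique solvability of the linearized Dirichlet problem \eqref{eqL} in $W^{1,2}_0(\Omega)$), I would test the homogeneous equation against a solution $w$ and integrate by parts to obtain
\begin{equation*}
\int_\Omega a_{ij}(u,\nabla u)\, w_i w_j \, dx = -\int_\Omega a_s(u,\nabla u)\, u_i\, w\, w_i \, dx.
\end{equation*}
By \eqref{uni}, the left side dominates $\lambda_0 \|\nabla w\|_{L^2}^2$. By \eqref{decay}, $|a_s\, u_i| \le |a_s|\,|\nabla u| \le \ep_0$ pointwise, no matter how large $|\nabla u|$ may be. Cauchy--Schwarz followed by the Poincar\'e inequality for $w\in W^{1,2}_0$ then yields
\begin{equation*}
\lambda_0 \int_\Omega |\nabla w|^2\, dx \le \ep_0\, C(\Omega) \int_\Omega |\nabla w|^2\, dx,
\end{equation*}
which forces $w\equiv 0$ once $\ep_0$ is chosen small in terms of $\lambda_0$ and $\Omega$, consistent with the hypothesis in \eqref{decay}. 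The Fredholm alternative \cite[Theorem~5.3]{GT} then gives existence, and standard Schauder/Sobolev bootstrapping based on the a priori $C^{2,\al}$ estimate \eqref{C2a} for $u[f]$ upgrades the solution to any $C^\ell$ regularity.

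For Step~2, I would run the same difference-quotient argument as before. The coefficients $a_{ij}(;t)$ and $a_s(;t)\,u_i(;0)$ of problem \eqref{probt} are continuous perturbations in $t$ of those of \eqref{eqL}, by Lemma~\ref{lem:cont} and the smoothness of $a$; in particular, uniform ellipticity of $a_{ij}(;t)$ persists for small $|t|$. To control the first-order term, I would decompose
\begin{equation*}
a_s[u(;\sigma;t)]\, u_i(;0) = a_s[u(;\sigma;t)]\,[\nabla u(;\sigma;t)]_i + a_s[u(;\sigma;t)]\,\bigl(u_i(;0)-[\nabla u(;\sigma;t)]_i\bigr),
\end{equation*}
bounding the first piece by $\ep_0$ via \eqref{decay} and the second by $\mu_0(|u|)\cdot O(t)$ using the blanket bound $|a_s|\le \mu_0(|s|)$ implied by \eqref{grow} together with Lemma~\ref{lem:cont}. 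The Step~1 energy estimate then remains coercive for small $t$, giving unique solvability of \eqref{probt} with uniform-in-$t$ $W^{1,2}$ bounds. The difference $w(;t) = v(;t)-v$ satisfies a Dirichlet problem with the same principal structure and an $o(1)_{C^\beta}$ right-hand side, so the strong $W^{2,p}$ estimate \cite[Theorem~9.13]{GT} and Sobolev embedding yield $v(;t)\to v$ in $C^1$.

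The main obstacle I anticipate is precisely this transfer of the decay bound \eqref{decay} from $a_s[u(;\sigma;t)]\,\nabla u(;\sigma;t)$ to the mixed quantity $a_s[u(;\sigma;t)]\, u_i(;0)$, since $|a_s|$ itself need not be small at points where $|\nabla u|$ is small and the averaging segment uses a different gradient from the one that multiplies $a_s$. The decomposition above isolates a piece directly controlled by \eqref{decay} and an $O(t)$ remainder that is harmless for $|t|$ small, and this is the only place where \eqref{grow} is genuinely needed in the argument.
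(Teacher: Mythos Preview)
Your proposal is correct and follows the paper's strategy closely. The one place you differ is in how you transfer the bound \eqref{decay} to the mixed quantity $a_s(;t)\,Du(;0)$: you split the \emph{gradient} factor, writing $a_s[u(;\sigma;t)]\,Du(;0) = a_s[u(;\sigma;t)]\,Du(;\sigma;t) + a_s[u(;\sigma;t)]\,(Du(;0)-Du(;\sigma;t))$ and applying \eqref{decay} at the interpolated point, whereas the paper splits the \emph{$a_s$} factor via $a_s(;t)=a_s(;0)+r_2(t)$ with $r_2(t)=o(1)_{C^{1,\beta}}$ and applies \eqref{decay} at $t=0$, obtaining $|a_s(;t)\,Du(;0)|\le \ep_0 + o(1)\|Du(;0)\|_{L^\infty}\le 2\ep_0$. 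Both decompositions work; yours is arguably more direct since it applies \eqref{decay} exactly where $a_s$ is evaluated, while the paper's avoids invoking the separate bound $|a_s|\le\mu_0$ from \eqref{grow}. One minor imprecision: Lemma~\ref{lem:cont} only gives continuity of $f\mapsto u[f]$, so your remainder $Du(;0)-Du(;\sigma;t)$ is $o(1)$ as $t\to 0$ rather than $O(t)$; this is harmless for the argument.
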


\begin{proof}
The proof of Step 2 in Proposition \ref{prop:lin_small} is unchanged, and we repeat Step 1 until \eqref{aijbound}.  To proceed further, we invoke additional conditions \eqref{uni} and \eqref{decay}.  We suppose $t$ is sufficiently small so that
\eqal{
|a_s(;t)Du(;0)|\le\ep_0+o(1)_{C^{1,\beta}}\|Du(;0)\|_{L^\infty}\le 2\ep_0.
}
Applying the Cauchy-Schwartz inequality and the Poincar\'e inequality, we obtain
\eqal{
\int_\Omega|Dw|^2\,dx\le \frac{4\ep_0^2}{\lambda_0}\cdot C(\Omega)\int_\Omega |Dw|^2\,dx.
}
Choosing $\ep_0$ small depending on $\lambda_0,\Omega$ yields the desired conclusion.  The rest of the proof proceeds identically.
\end{proof}

\subsection{Solutions with prescribed boundary jets}
\label{sec:jet}

%We clarify the reason for introducing condition \eqref{decay} in Theorem \ref{thm}: we can find a solution with any prescribed jet $(u(x_0),\nabla u(x_0))$ at some boundary point $x_0\in\pd\Omega$.  The idea is that we can find enough barriers (comparison functions) to force the solution to have arbitrarily large normal derivatives.

%\medskip
%\begin{defn}\label{defn:P}
%Given $\nu_0\in S^{n-1}$, we let $\mc P_{\nu_0}$ be the supporting hyperplane of $\Omega$ at some $x_0\in\pd\Omega$, such that $\nu_0=\nu|_{x_0}$ is the outer unit normal vector for $\pd\Omega$ at contact point $x_0$.  We denote $d(x)$ the distance function from $\mc P_{\nu_0}$.  
%\end{defn}

\medskip
We first introduce a coordinate normalization.
\begin{defn}\label{Omega}
We say that \textit{$\Omega$ sits above the origin}, if $\{x^n=0\}$ is a supporting hyperplane for $\Omega$ with $0\in\pd\Omega$,
%Omega need not be closed for the definition of supporting hyperplane 
and $\Omega\subset\{x^n>0\}$.
%$0\in\pd\Omega$, that $e_n=(0,\dots,0,1)$ is the inner unit normal vector to $\pd\Omega$ at $x=0$, and that $x_n>0$ inside $\Omega$.  We may suppose $e=e_n$ in condition \ref{tsym}.
\end{defn}

The class of PDEs \eqref{eq} with conditions \eqref{aij} and \eqref{grow} considered is invariant under Euclidean isometries.  Let $I=R\circ T$ be a Euclidean isometry (rotation $R$ plus translation $T$) such that $I^{-1}(\Omega)$ sits above the origin.  The resulting PDE for $I^*u(x):=u(Ix)$ is also of class \eqref{eq}:
\eqal{\label{eqR}
\nabla\cdot \left[R_*a(I^*u,\nabla I^*u)\nabla I^*u\right]=0,\qquad x\in I^{-1}(\Omega),
}
where $R_*a(s,p)=a(s,R^{-1}p)$.  It is clear that $R_*a$ satisfies any of conditions \eqref{aij}, \eqref{grow}, or \eqref{decay} if and only if $a$ does.

\medskip
We now construct solutions with prescribed boundary jet, provided the gradient at the boundary point is sufficiently small compared to the solution.  No additional assumptions on $a$ are needed.  Recall $\Pi_1(s)$ as in Proposition \ref{prop:lin_small}.

\begin{prop}
\label{prop:open}
Let $\Pi:\re\to(0,\infty)$ be the positive continuous function defined in \eqref{Pidef}.  Then for any $x_0\in\pd\Omega$ and $(s,p)\in\re\times\re^n$ such that $|p|<\Pi(s)$, there exists boundary data $f\in C^\infty(\pd\Omega)$ for \eqref{eq} such that $u(x_0)=s$ and $\nabla u(x_0)=p$, with $\|f-s\|_{C^{2,\al}}\le \Pi_1(s)$.%C(\Omega)|p|$.
%There is an open set $V\subset\re\times\re^n$ such that for all $(s,p)\in V$, there exists boundary data $f\in C^\infty(\pd\Omega)$ such that the solution $u$ to \eqref{eq} satisfies $u(0)=s,\nabla u(0)=p$.
\end{prop}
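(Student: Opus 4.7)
The plan is to prescribe the $1$-jet at $x_0$ by varying a one-parameter family of boundary data and pinning the normal derivative via a comparison argument, since the tangential $1$-jet is automatic from the boundary data.

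First, by the isometry invariance \eqref{eqR}, reduce to the normalized setting of Definition \ref{Omega}: $\Om$ sits above the origin with $x_0=0$ and interior unit normal $e_n$ at $0$. Consider the family
\eqal{
f_\mu(x) = s + p\cdot x + \tfrac{1}{2}\mu (x^n)^2,\qquad x\in\pd\Om,\ \mu\in\re.
}
Then $f_\mu(0)=s$ and the tangential gradient of $f_\mu$ at $0$ is $p_T:=(p_1,\dots,p_{n-1})$, so the solution $u_\mu:=u[f_\mu]$ automatically satisfies $u_\mu(0)=s$ and has the correct tangential part of $\nabla u_\mu(0)$; only $\mc N(\mu):=\pd_n u_\mu(0)$ remains to be set equal to $p_n$. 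The norm bound $\|f_\mu-s\|_{C^{2,\al}(\pd\Om)}\le C(\Om)(|p|+|\mu|)$ stays below $\Pi_1(s)$ as long as $|p|$ and $|\mu|$ are small enough.

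The core is to bracket $\mc N(\mu)$ by comparison. Introduce barriers $\phi^\pm(x)=s+p\cdot x+\tfrac12\alpha^\pm(x^n)^2$, which share the $1$-jet $(s,p)$ at $0$. A direct computation using \eqref{aij} gives
\eqal{
\nabla\cdot\bigl(a(\phi^\pm,\nabla\phi^\pm)\nabla\phi^\pm\bigr)=a_s(\phi^\pm,\nabla\phi^\pm)|\nabla\phi^\pm|^2+a_{nn}(\phi^\pm,\nabla\phi^\pm)\,\alpha^\pm,
}
so choosing $\alpha^+$ slightly less than $-a_s(s,p)|p|^2/a_{nn}(s,p)$ and $\alpha^-$ slightly greater makes $\phi^+$ a supersolution and $\phi^-$ a subsolution of \eqref{eq} near $0$; by \eqref{aij}--\eqref{grow} this is admissible provided $|p|$ is small enough, which quantifies $\Pi(s)$ in \eqref{Pidef}, with $|\alpha^\pm|$ of order $|p|^2$. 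For $\mu<\alpha^+$, the inequality $\phi^+\ge f_\mu$ holds on $\pd\Om$ since $(\alpha^+-\mu)(x^n)^2/2\ge 0$; after extending $\phi^+$ globally by a logarithmic correction so that it remains a supersolution on all of $\Om$ while continuing to dominate $f_\mu$ on $\pd\Om$, the comparison principle \cite[Theorem 10.7]{GT} gives $\phi^+\ge u_\mu$ in $\Om$ with equality at $0$. Hopf's lemma applied to the linear equation satisfied by $\phi^+-u_\mu$ then forces $\mc N(\mu)<\pd_n\phi^+(0)=p_n$. Symmetrically, for $\mu>\alpha^-$, $\mc N(\mu)>p_n$. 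Since Lemma \ref{lem:cont} gives continuity of $\mc N$, the intermediate value theorem produces some $\mu^*\in(\alpha^+,\alpha^-)$ with $\mc N(\mu^*)=p_n$, and $f:=f_{\mu^*}$ is the desired boundary data.

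The main obstacle is the global extension of the local barriers $\phi^\pm$ to all of $\Om$ while (i) retaining the sub/supersolution property, (ii) preserving the $1$-jet at $0$, and (iii) maintaining the boundary inequalities on all of $\pd\Om$. This is where the name "logarithmic barriers'' enters: a correction of the form $\pm A\log(1+Br(x))$, with $r(x)$ the distance to $x_0$ and cut off near $0$, grows enough to yield a global super/subsolution while vanishing together with its gradient at $x_0$, so the $1$-jet is undisturbed. The smallness threshold $|p|<\Pi(s)$ is exactly what allows the local barrier inequalities $\alpha^+<-a_s|p|^2/a_{nn}<\alpha^-$ to be consistent with the global extension.
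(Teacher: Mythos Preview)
Your overall strategy --- normalize via a Euclidean isometry, bracket the normal derivative by comparison with sub/super\-solutions, then invoke continuity and the intermediate value theorem --- matches the paper's. The essential difference is in the barrier construction, and that is where your argument has a genuine gap.

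The paper does not build local barriers and then extend them. It writes down from the outset the \emph{global} one-sided solutions
\[
u^{s,p}(x)=s+p'\cdot x-Ap_n\log\bigl(1-x^n/A\bigr),\qquad A=2\,\mathrm{diam}\,\Omega,
\]
whose only second derivative is $u^{s,p}_{nn}=p_n/(A-x^n)^2$, and checks directly that $a_{ij}u^{s,p}_{ij}+a_s|\nabla u^{s,p}|^2$ has a sign on all of $\Omega$ whenever $p$ lies in an explicit paraboloid (this is what produces the function $\Pi(s)$). It then uses $u^{s,p}|_{\pd\Omega}$ \emph{itself} as the boundary data, so the comparison $u[u^{s,p}|_{\pd\Omega}]\gtrless u^{s,p}$ is immediate and yields $\pd_n u(0)\gtrless p_n$ with no extension step. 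Connectedness of the set of admissible boundary data with fixed tangential jet replaces your one-parameter IVT.

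Your quadratic barriers $\phi^\pm$ are only one-sided near $0$: away from the origin $|\nabla\phi^\pm|^2$ grows and the sign of $a_s|\nabla\phi^\pm|^2+a_{nn}\alpha^\pm$ is uncontrolled. You acknowledge this and propose to repair it with ``a correction of the form $\pm A\log(1+Br(x))$ \dots\ cut off near $0$'', but this is precisely the content of the proposition, and it is asserted rather than carried out. First, $r(x)=|x-x_0|$ is not $C^2$ at $x_0$, so a cutoff is mandatory; but a cutoff generically destroys the sub/supersolution inequality on the transition region. Second, once you add the correction, the boundary values of $\phi^\pm$ change and you must re-verify $\phi^+\ge f_\mu$ on \emph{all} of $\pd\Omega$, not just near $0$; nothing in your sketch addresses this. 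Third, the Hopf step is both unnecessary (non-strict inequalities already suffice for the IVT) and not justified as stated for the difference $\phi^+-u_\mu$. In short, the ``global extension'' paragraph is the heart of the argument, and as written it is a wish rather than a proof; the paper sidesteps the difficulty entirely by choosing barriers that are global from the start and serve simultaneously as the boundary data.
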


\begin{proof}
%If $p=0$ then $u(x)\equiv s$ is a solution, and there is nothing to prove, so we assume $|p|>0$.
%
%medskip

\textbf{Step 1.} We construct sub/super-solutions.

\medskip
First suppose $\Omega$ sits above the origin and $x_0=0$.  Given $(s,p)\in\re\times\re^n$ with $p=p'+p_ne_n$, we define
\eqal{
u^{s,p}(x):=s+p'\cdot x-Ap_n\log(1-x^n/A),
}
where $A$ is the sufficiently large constant
\eqal{\label{Adef}
A:=2\text{ diam }\Omega\ge 2\max_{x\in\Omega}x^n.
}
We have $u^{s,p}(0)=s$ and $Du^{s,p}(0)=p$.  The derivatives:
\eqal{
Du^{s,p}(x)=p'+\frac{p_n}{A-x^n}e_n,\quad D^2u^{s,p}(x)=\frac{p_n}{(A-x^n)^2}e_n\otimes e_n.
}
The expanded form of equation \eqref{eq} is
\eqal{\label{exp_eq}
a_{ij}u_{ij}+a_s|\nabla u|^2=0.
}
For $\pm 1=\text{sign }p_n$, we thus obtain
\eqal{
\pm (a_{ij}u^{s,p}_{ij}+a_s|Du^{s,p}|^2)&\ge \lambda_0(|u^{s,p}|)\frac{|p_n|}{(A-x^n)^2}-\mu_0(|u^{s,p}|)(|p'|^2+\frac{p_n^2}{(A-x^n)^2})\\
&\ge \frac{\mu_0(U')}{(A-x^n)^2}\left[\frac{\lambda_0(U')}{\mu_0(U')}|p_n|-(\frac{A^2}{4}|p'|^2+p_n^2)\right],
}
where 
$$
U':=s+\frac{A}{2}\,|p'|+A|p_n|\log 2
$$ 
satisfies $|u^{s,p}(x)|\le U'$ on $\Omega$.

\medskip
Let us now restrict $p$ to satisfy 
\eqal{\label{Udef}
\max(|p'|,|p_n|)\le 1,\qquad U'\le U:=s+2A.
}
Then we obtain
\eqal{
\pm (a_{ij}u^{s,p}_{ij}+a_s|Du^{s,p}|^2)&\ge\frac{\mu_0(U)}{(A-x^n)^2}\left[\frac{\lambda_0(U)}{\mu_0(U)}|p_n|-(\frac{A^2}{4}|p'|^2+p_n^2)\right].
}
Choosing now $p$ to be any vector defined in the paraboloid
\eqal{
\label{para0}
\left(\frac{2\lambda_0(U)}{A^2\mu_0(U)}\right)^{-1}|p'|^2\le|p_n|\le \frac{\lambda_0(U)}{2\mu_0(U)},\qquad \max(|p'|,|p_n|)\le 1,
}
we find that $u^{s,p}$ is a one-sided solution:
\eqal{
\pm (a_{ij}u^{s,p}_{ij}+a_s|Du^{s,p}|^2)&\ge 0.
}
To remove the ``corners" of the paraboloid \eqref{para0}, we denote by $U=U^s$ the constant in \eqref{Udef}, and denote the various constants by
$$
C^1_s:=\min(\frac{\lambda_0(U^{s})}{2\mu_0(U^{s})},1),\qquad C^2_s:=\min(\frac{2\lambda_0(U^{s})}{A^2\mu_0(U^{s})},1),
$$
so that after shrinking the bounds, the paraboloid becomes
\eqal{
\label{para1}
\frac{1}{C^2_s}|p'|^2\le |p_n|\le C^1_s.
}
To satisfy the smallness condition of Proposition \ref{prop:lin_small}, we shrink the bounds again.  Defining
$$
B^1_s:=\min(C^1_s,\frac{\Pi_1(s)}{N\text{ diam }\Omega}),\qquad B^2_s:=C^2_s,
$$
for some sufficiently large $N$, it is easy to verify that $u^{s,p}$ satisfies the desired estimate
$$
\|u^{s,p}-s\|_{C^{2,\al}(\pd\Omega)}\le \Pi_1(s)
$$
provided $p=(p',p_n)$ lies within the paraboloid
\eqal{
\label{para}
\frac{1}{B^2_s}|p'|^2\le |p_n|\le B^1_s.
} 
We next construct solutions with boundary jet inside this paraboloid, and more generally in the smallest rectangle containing it.

\medskip
\textbf{Step 2.} A comparison principle argument. 

\medskip
Given $s$, we let $p\in \re^{n-1}$ be any vector in the vertical boundary of the paraboloid \eqref{para}:
\eqal{\label{parabdy}
|p'|^2<B^1_sB^2_s,\qquad p_n=B^1_s.
}
Since $p_n>0$, it follows that $u^{s,p}$ is a subsolution of \eqref{eq}.  Choosing boundary data $f\in C^\infty(\pd\Omega)$ of the form
\eqal{
f^{s,p}(x):=u^{s,p}(x),\qquad x\in\pd\Omega,
}
the comparison principle \cite[Theorem 10.7]{GT} shows that
\eqal{
u[f^{s,p}](x)\ge u^{s,p}(x),\qquad x\in\Omega,
}
with equality at $x=0$, so the inner normal derivatives satisfy the same inequality:
\eqal{
\frac{\pd u[f^{s,p}]}{\pd x^n}(0)\ge \frac{\pd u^{s,p}}{\pd x^n}(0)=p_n.
}
If we instead choose 
$$
p_n=-B^1_s,
$$ 
then the inequalities reverse because $u^{s,p}$ becomes a supersolution.  This means that the set of normal derivatives with tangential jet $(s,p')$ and \textit{small} boundary data,
\eqal{
\mc N^{s,p'}:=\{\frac{\pd u[g]}{\pd x^n}(0)\Big| g\in C^\infty(\pd\Omega),g(0)=s,\nabla g(0)=p', \text{ and }\\
\|g-s\|_{C^{2,\al}(\pd\Omega)}\le \Pi_1(s)\},
}
contains numbers at least as large as $B^1_s$ and at least as small as $-B^1_s$.  Since the set of $g$ with boundary jet $(g,\nabla g)(0)=(s,p')$ is convex, it is connected.  So, of course, is the set of such $g$ also satisfying the smallness estimate
\eqal{
\label{gsmall}
\|g-s\|_{C^{2,\al}(\pd\Omega)}\le \Pi_1(s).
}
By the smoothness of solutions to \eqref{eq}, we deduce that $\mc N^{s,p'}$ is connected, and it therefore contains the interval $[-B^1_s,B^1_s]$.  Varying $p'$ according to \eqref{parabdy}, we conclude that for each $s$ and any vector $p$ in the rectangle
$$
|p'|^2\le B^1_sB^2_s,\qquad |p_n|\le B^1_s,
$$
there exists a solution with boundary jet $(u(0),\nabla u(0))=(s,p)$ with boundary data $g$ satisfying \eqref{gsmall}.  Defining $\Pi$ by the radius of the largest sphere inside the rectangle,
\eqal{\label{Pidef}
\Pi(s)=\min(\sqrt{B^1_sB^2_s},B^1_s).
}
This completes the proof of the Proposition, in the case that $\Omega$ sits above the origin.

\medskip
\textbf{Step 3.} Rotational invariance.

\medskip
Now suppose $\Omega\subset\re^n$ is any smooth domain, and choose $s,p$ such that $|p|<\Pi(s)$, and $x_0\in\pd\Omega$.  Letting $T(x)=x-x_0$ translate $x_0$ to the origin, we choose a rotation $R\in O(n)$ such that the inner normal vector is mapped to $e_n$, i.e. $R(-\nu|_{x_0})=e_n$.  Then for Euclidean isometry $I=R\circ T$, we see that $I^{-1}(\Omega)$ sits above the origin.  Moreover, inequalities \eqref{aij}, \eqref{coer}, and \eqref{grow} and the constant $A$ in \eqref{Adef} are preserved under the isometry.  Since $|Rp|<\Pi(s)$, we can find a solution $I^*u$ of the rotated equation \eqref{eqR} such that $I^*u(0)=s$ and $\nabla I^*u(0)=Rp$.  In original variables, we have $u(x_0)=s$ and $\nabla u(x_0)=p$, which completes the proof.
\end{proof}

%\begin{rem}
%We can enlarge $\Pi(s)$ slightly if we only seek solutions with vanishing tangential jet, i.e. $\nabla u(x_0)=\frac{\pd u}{\pd \nu}\nu$.  Such solutions are enough to solve the inverse problem for real-analytic homogeneous conductivities if $n\ge 3$.
%\end{rem}

If we also assume the decay condition \eqref{decay}, then we can choose $\Pi=+\infty$.  That is, we can find a solution with any prescribed boundary jet.  We can also weaken conditions \eqref{uni} and \eqref{decay}, in this proof, to
\eqal{
\label{decay1}
|a_s(s,p)|\le\,C\,\frac{\lambda(s,p)}{|p|},
}
where $\lambda(s,p)$ is in \eqref{aij}, and $C>0$ is arbitrary.  This condition makes \eqref{eq} behave similar to a linear equation which admits exponential solutions.
\begin{prop}
\label{prop:decay}
Let $a$ satisfy \eqref{decay1}, or in particular \eqref{uni} and \eqref{decay}.  For any $(s,p)\in\re\times\re^n$ and $x_0\in\pd\Omega$, there exists boundary data $f\in C^\infty(\pd\Omega)$ for \eqref{eq} such that $u(x_0)=s$ and $\nabla u(x_0)=p$.
\end{prop}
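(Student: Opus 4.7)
The plan is to mimic the strategy of Proposition \ref{prop:open}, but replace the logarithmic barrier by an exponential one whose validity is exactly what condition \eqref{decay1} is designed to ensure. As in Step 3 of Proposition \ref{prop:open}, \eqref{decay1} is preserved under Euclidean isometries, so I would first reduce to the case where $\Omega$ sits above the origin and $x_0=0$, with inward normal $e_n$. The substantive work then has two steps: (i) for any tangential jet $(s,p')\in\re\times\re^{n-1}$ and any real $M\ne 0$, construct a function $u^{s,p',M}$ that is a subsolution (resp.\ supersolution) of \eqref{eq} when $M>0$ (resp.\ $M<0$) with boundary jet $(s,p',M)$ at the origin; (ii) use the comparison principle together with the connectedness of the space of boundary data of prescribed tangential jet to realize every real $p_n$ as a normal derivative.

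For step (i) I would take
$$u^{s,p',M}(x) := s + p'\cdot x + \frac{M}{\gamma}\bigl(e^{\gamma x^n}-1\bigr),$$
with $\gamma>0$ a parameter to be chosen. The jet at $0$ is $(s,p',M)$, and the only nonzero second derivative is $u^{s,p',M}_{nn}=M\gamma e^{\gamma x^n}$. Substituting into the expanded form \eqref{exp_eq} and using $a_{nn}\ge\lambda$ from \eqref{aij} together with \eqref{decay1}, one finds
$$\mathrm{sgn}(M)\bigl[a_{ij}u^{s,p',M}_{ij} + a_s|\nabla u^{s,p',M}|^2\bigr] \ge \lambda\bigl[|M|\gamma e^{\gamma x^n} - C|\nabla u^{s,p',M}|\bigr].$$
Using $|\nabla u^{s,p',M}|\le |p'| + |M|e^{\gamma x^n}$ and $e^{\gamma x^n}\ge 1$ on $\Omega$, the right-hand side is bounded below by $\lambda\bigl[|M|(\gamma-C)-C|p'|\bigr]$, which is nonnegative once $\gamma \ge C + C|p'|/|M|$. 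Hence $u^{s,p',M}$ is a one-sided solution of the desired type. The key point is that both $|\nabla u^{s,p',M}|$ and $u^{s,p',M}_{nn}$ scale like $|M|e^{\gamma x^n}$, so their ratio is the free parameter $\gamma$, and condition \eqref{decay1} is precisely what lets us absorb $|a_s|\,|\nabla u|^2$ into $\lambda\,u_{nn}$ by choosing $\gamma$ large compared to $C$.

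For step (ii), given a target $(s,p)$ with $p=(p',p_n)$, pick any $M>|p_n|$ and set $f^{\pm}:=u^{s,p',\pm M}\big|_{\pd\Omega}\in C^\infty(\pd\Omega)$. By the comparison principle \cite[Theorem 10.7]{GT}, $u[f^+]\ge u^{s,p',M}$ in $\Omega$ with equality at $0$, so $\pd_{x^n}u[f^+](0)\ge M$; symmetrically $\pd_{x^n}u[f^-](0)\le -M$. The set of $g\in C^\infty(\pd\Omega)$ with $g(0)=s$ and $\nabla_T g(0)=p'$ is affine and path-connected, so Lemma \ref{lem:cont} makes $\mc N^{s,p'}:=\{\pd_{x^n}u[g](0)\}$ a connected subset of $\re$ containing $\pm M$, hence containing the interval $[-M,M]\ni p_n$. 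Any $g$ realizing $p_n$ then gives, after transferring back via the isometry, boundary data $f$ with $(u(x_0),\nabla u(x_0)) = (s,p)$. The main obstacle specific to this proposition is the barrier construction in step (i); once that is in place the connectedness argument of Proposition \ref{prop:open} goes through verbatim, with the paraboloid \eqref{para} now replaced by all of $\re^n$.
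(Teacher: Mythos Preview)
Your proposal is correct and follows essentially the same approach as the paper: the paper also reduces via Euclidean isometry to $\Omega$ sitting above the origin, uses the exponential barrier $u^{s,p}(x)=s+p'\cdot x+h\,p_n(e^{x^n/h}-1)$ (your $\gamma=1/h$, $M=p_n$), verifies the one-sided inequality from \eqref{aij} and \eqref{decay1} by an identical calculation, and then concludes by the same comparison/connectedness argument. The only cosmetic difference is that the paper phrases the last step as showing the set of achievable normal derivatives is unbounded above and below, whereas you fix $M>|p_n|$ and exhibit $[-M,M]$; these are equivalent.
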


\begin{proof}
As in Step 3 in the proof of Proposition \ref{prop:open}, by the invariance of the problem under Euclidean isometries, it suffices to assume that $\Omega$ sits above the origin.

\medskip
Fix $(s,p)\in\re\times \re^{n}$ with $p_n\neq 0$ and $p':=p-p_ne_n$, and for $h>0$ small depending on the constant $C$ in \eqref{decay1} and $p$, we introduce the function
\eqal{
u^{s,p}(x):=s+p'\cdot x+h\,p_n\,(e^{x^n/h}-1).
}
If $p_n>0$ (resp. $p_n<0$) then we claim that $u^{s,p}$ is a subsolution (supersolution) of \eqref{eq}, with jet
\eqal{
u^{s,p}(0)=s,\qquad \nabla u^{s,p} (0)=p.
}
We have
\eqal{
|\nabla u^{s,p}|^2=|p'|^2+p_n^2e^{2x^n/h},\qquad u^{s,p}_{ij}=\frac{p_n}{h}e^{x^n/h}\delta_{in}\delta_{jn},
}
so ellipticity \eqref{aij} yields, for $\pm 1=\text{sign } p_n$,
\eqal{
\pm(a_s|\nabla u^{s,p}|^2+a_{ij}u^{s,p}_{ij})\ge \pm a_s|\nabla u^{s,p}|^2+\lambda \frac{|p_n|}{h}e^{x^n/h}.
}
Applying decay condition \eqref{decay1}, we obtain
\eqal{
\pm(a_s|\nabla u^{s,p}|^2+a_{ij}u^{s,p}_{ij})\ge \lambda e^{x^n/h}\left(\frac{|p_n|}{h}-C\sqrt{|p'|^2e^{-2x^n/h}+p_n^2}\right).
}
Choosing $h$ such that $|p_n|/h\ge C\sqrt{|p'|^2+p_n^2}$ verifies the claim.

\medskip
We next observe that by continuity, the set of possible normal derivatives with tangential jet $(s,p')$
\eqal{
\{\frac{\pd u[g]}{\pd x^n}(0)|\quad g\in C^\infty(\pd\Omega) \text{ and }g(0)=s,\nabla'g(0)=p'\}
}
is connected and nonempty, so to show it is $\re$, we need only show it is unbounded from above and below.  If $p_n\gg 1$ is large, we let $f=u^{s,p}|_{\pd\Omega}$ and $u$ solve \eqref{eq} with boundary value $f$.  Then $u^{s,p}$ is a subsolution, and the comparison principle \cite[Theorem 10.7]{GT} says 
$$
u(x)\ge u^{s,p}(x),\qquad x\in\Omega,
$$
with equality at $x=0$, so 
$$
\frac{\pd u}{\pd x^n}(0)\ge \frac{\pd u^{s,p}}{\pd x^n}(0)=p_n.
$$
Repeating the argument with $p_n\ll -1$ and using supersolution $u^{s,p}$ yield the desired conclusion.
\end{proof}

\begin{rem}
The results of this section hold true unchanged for non-homogeneous conductivities $a=a(x,s,p)$, provided the inequalities \eqref{aij} and \eqref{grow} remain independent of $x$.  We instead define the transformed conductivity in \eqref{eqR} by $I_*a(x,s,p)=a(Ix,s,R^{-1}p)$.  
\end{rem}

\subsection{Geometric reformulation of the linearized problem}
\label{sec:geo}
We next geometrically reformulate the linearized equation \eqref{eqL} and its DN map \eqref{DNL}, see e.g. \cite{LU} for the same with the conductivity equation via the Laplace operator.  We assume throughout this section that the linearization of \eqref{eq} exists at a given solution $u$, which we addressed in Section \ref{sec:lin}.%  In order to properly handle the extra terms $A_{ij}\nu_iv_j$, we give a detailed description.

\medskip
Let us first rewrite the equation.  We can decompose the principal part of \eqref{eqL}, namely
$$
\pd_i(a_{p_j}v_ju_i+av_i),
$$
into its symmetric and anti-symmetric parts and rewrite the linearized differential operator as:
\eqal{\label{eqL1}
\mc L_a[u]v&:=\pd_i(a_svu_i+a_{p_j}v_ju_i+av_i)\\
&=\pd_i(a_{ij}v_j+b^iv),
}
where linearized conductivity $a_{ij}$, given in \eqref{aij}, is the symmetric part, and 
\eqal{\label{bA}
b^i:=a_su_i-\pd_jA_{ij},\qquad A_{ij}:=\frac{1}{2}(a_{p_j}u_i-a_{p_i}u_j)
}
accounts for the anti-symmetric part.  We can also rewrite the linearized DN map \eqref{DNL}:
\eqal{\label{DNL1}
\Gamma_a[u](h)=(a_{ij}\nu_iv_j+A_{ij}\nu_iv_j+a_sv\,u_in_i)dS.
}
Henceforth in this section only, we distinguish Euclidean coordinates from ordinary subscripts (which will denote local coordinates) unless otherwise stated.

\medskip
We next rewrite this equation in terms of geometric quantities.  Let $x^i$ denote local coordinates on $\Omega$, and denote $x^i_e$ to be global Euclidean coordinates.  We define a $(2,0)$ tensor field $G\in \mc T^{2}_0(\Omega)$ by its values in Euclidean coordinates, using the symmetric part $a_{ij}$ in \eqref{aij} of the linearized operator.  If $n\ge 3$,
\eqal{
G^{ij}_e:=G(dx^i_e,dx^j_e)= %a\delta_{ij}+\frac{1}{2}(a_{p_j}u_i+a_{p_i}u_j)
(\det a_{k\ell})^{\frac{1}{2-n}}a_{ij},\qquad G=G^{ij}\pd_i\pd_j.
}
If $n=2$, we first define a conductivity function $\sigma\in C^\infty(\Omega)$ by
\eqal{\label{sigma}
\sigma:=\det a_{k\ell}.
}
We can then define $G$ as follows:
\eqal{
G^{ij}_e:=G(dx^i_e,dx^j_e)= (\det a_{k\ell})^{-1/2}a_{ij},\qquad G=G^{ij}\pd_i\pd_j.
}
In each case, the relationship can be written as
\eqal{\label{gaij}
&n\ge 3:&&\frac{1}{\sqrt{\det G^{k\ell}_e}}G^{ij}_e=a_{ij},\\
&n=2:&&\sigma\,G^{ij}_e=a_{ij},\qquad \det G^{k\ell}_e=1.
}
We next define a Riemannian metric $g\in \mc T^0_2(\Omega)$ by the inverse of $G$ in Euclidean coordinates, which is well defined by ellipticity \eqref{aij}
\eqal{\label{g}
g_{e,ij}:=g(\frac{\pd}{\pd x^i_e},\frac{\pd}{\pd x^j_e})=(G_e^{-1})_{ij},\qquad g=g_{ij}dx^idx^j.
}
We will instead denote $\pd_{e,i}=\pd/\pd x^i_e$.  The divergence of the linearized conductivity takes two forms in various dimensions:
\eqal{\label{divg}
&n\ge 3:&& \pd_{e,i}(a_{ij}\pd_{e,j}v)=\sqrt{g_e}\,\Delta_{g}v,\\
&n=2:&&\pd_{e,i}(a_{ij}\pd_{e,j}v)=\text{div}_g(\sigma\nabla_gv)=\sigma\Delta_gv+\langle \nabla_g\sigma,\nabla_gv\rangle_g,
}
where, in local coordinates, the Laplace operator, gradient, and divergence are given by
\eqal{
\Delta_gv=\frac{1}{\sqrt g}\pd_i(\sqrt g\,g^{ij}\pd_jv),\quad \nabla_gv=g^{ij}\pd_jv\,\pd_i,\quad\text{div}_gA=\frac{1}{\sqrt g}\pd_i(\sqrt g\,A^i).
}
To account for the lower order terms in \eqref{eqL1}, we define the \textit{magnetic Schr\"odinger operator with potential}:
\eqal{\label{mag}
\Delta_{g,A,q}v&:=\frac{1}{\sqrt g}(\pd_i+A_i)\sqrt g\,g^{ij}(\pd_j+A_j)v+qv\\
&=\Delta_gv+2\langle A^\#,\nabla_gv\rangle+\left( %\frac{1}{\sqrt g}\pd_i(\sqrt g\,g^{ij}A_j)
\text{div}_gA^\#+|A|_g^2+q\right)v,
}
where $A=A_idx^i,q$ are to be chosen, $A^\#=g^{ij}A_jdx^i$, and $|A|_g^2=g^{ij}A_iA_j$.  Recalling \eqref{divg}, we obtain the following relations:
\eqal{
\label{eqLgeo}
&n\ge 3:&&\mc L_a[u]v=\sqrt{g_e}\,\Delta_{g,A,q}v,\\
&n=2:&&\mc L_a[u]v=\sigma\,\Delta_{g,A,q}v,
}
provided the following relations are true:
\eqal{\label{Aq}
&n\ge 3:&&b^i=2\sqrt{g_e}\,g^{ij}_eA_{e,j},\qquad \pd_{e,i}b^i=\sqrt{g_e}(\text{div}_gA^\#+|A|_g^2+q),\\
&n=2:&&b^i+g^{ij}_e\pd_{e,j}\sigma=2\sigma g^{ij}_eA_{e,j},\qquad \pd_{e,i}b^i=\sigma(\text{div}_gA^\#+|A|_g^2+q).
}
This system can be solved for $A$ and $q$ in terms of $b,\sigma,$ and $g$.  This completes the reformulation of the linearized PDE \eqref{eqL} via \eqref{eqLgeo}.

\medskip
We next reformulate the linearized DN map \eqref{DNL1}, and start with the anti-symmetric part of the linearized operator \eqref{bA}.  We define a $(1,1)$ tensor $\al\in \mc T^1_1(\Omega)$ in terms of $g$:
\eqal{\label{alA}
\al_{e,j}^i:=\al(dx^i_e,\pd_{e,j})= \frac{1}{\sqrt{g_e}}\,g_{e,jk}A_{ik}%%\frac{1}{2\sqrt{g_e}}\,g_{e,jk}(a_{p_k}u_i-a_{p_i}u_k)
,\qquad \al=\al^i_j\pd_idx^j.
}
Here, $\sqrt{g_e}=\sqrt{\det g_{e,ij}}$, and $A_{ik}$ is as in \eqref{bA}.  The tensor $\al$ is anti-symmetric in the inner product induced by $g$:
\eqal{\label{anti}
\langle \al\cdot V,W\rangle_g=-\langle V,\al\cdot W\rangle_g,\quad V,W\in T\Omega,
}
since in Euclidean coordinates, with $V=V^i_e\pd_{e,i},W=W^i_e\pd_{e,i}$, we have
\eqal{
\langle \al\cdot V,W\rangle_g&=g_{e,i\ell}\left(\frac{1}{2\sqrt{g_e}}g_{e,jk}(a_{p_k}u_i-a_{p_i}u_k)V^j_e\right)W^\ell_e\\
&=:(g_{e,i\ell}T^{ik}g_{e,jk})V^j_eW^\ell_e,
}
where $T^{ik}$ is anti-symmetric in its indices, so the claim follows.  

\medskip
The Riemannian volume form $dV_g$ has the Euclidean representation
\eqal{
dV_g=\sqrt{\det g_e}\,dx^1_e\wedge\cdots\wedge dx^n_e=\sqrt{\det g_e}\,dx.
}
The induced area form $dS_g$ on $\pd \Omega$ is
\eqal{
dS_g=\nu_g\,\lrcorner\, dV_g|_{\pd\Omega},
}
where $\nu_g$ is the unit normal to $\pd\Omega$.  Letting $f$ be a defining function for $\pd\Omega$, the unit normals for Euclidean and $g$ metrics can be written in Euclidean coordinates as
\eqal{
\nu=(|df|_e)^{-1}\pd_{e,i}f\pd_{e,i},\qquad \nu_g=(G(df,df))^{-1/2}G^{ij}_e\pd_{e,j}f\pd_{e,i}.
}
We have the following relationship between the normal vectors.  This allows us to reformulate the part of the linearized DN map with anti-symmetric part $A_{ij}$.  This is likely well known, but we give the proof for completeness.  

\begin{lem}\label{lem:normal}
As sections of the product bundle %line bundle
$ %(\pd\Omega)^\bot
\pd(T\Omega)\otimes \Lambda^0_{n-1}(\pd\Omega)$, the following equality holds:
\eqal{\label{product}
&n\ge 3:&&(a_{ij}\nu_j\pd_i)\otimes dS=\nu_g\otimes dS_g,\\
&n=2:&& (a_{ij}\nu_j\pd_i)\otimes dS=\sigma\,\nu_g\otimes dS_g.
}
In particular,
\eqal{\label{product_cases}
&n\ge 3:&&(a_{ij}\nu_iv_j+A_{ij}\nu_iv_j)dS=\langle \nabla_g v+\al\cdot\nabla_gv,\nu_g\rangle_g dS_g,\\
&n=2:&& (a_{ij}\nu_iv_j+A_{ij}\nu_iv_j)dS=\langle \sigma\,\nabla_g v+\al\cdot\nabla_gv,\nu_g\rangle_g dS_g,
}
where $v\in C^\infty(\ov\Omega)$, $a_{ij},\sigma$ are in \eqref{gaij}, $A_{ij}$ is in \eqref{bA}, and $\al$ is in \eqref{alA}. %We denoted $(\pd\Omega)^\bot$ the normal bundle of $\pd\Omega$ with respect to $g$, and t
The local coordinates used in this lemma and proof are Euclidean, without subscript $e$.
\end{lem}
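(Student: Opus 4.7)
The plan is to establish \eqref{product} first, by viewing both sides as the ``conormal'' vector-valued boundary measure attached to the operator $\pd_{e,i}(a_{ij}\pd_{e,j}\cdot)$ via the divergence theorem, and then to deduce \eqref{product_cases} by contracting \eqref{product} with $dv$ and separately handling the anti-symmetric piece using the definition \eqref{alA} of $\al$.

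To prove \eqref{product}, I test both sides against an arbitrary $v\in C^\infty(\ov\Omega)$. Euclidean integration by parts gives
\[
\int_{\Omega}\pd_{e,i}(a_{ij}\pd_{e,j}v)\,dx=\int_{\pd\Omega}a_{ij}\nu_j v_i\,dS.
\]
On the other hand, identity \eqref{divg} rewrites the integrand as $\sqrt{g_e}\,\Delta_g v$ when $n\ge 3$ and as $\mathrm{div}_g(\sigma\,\nabla_g v)$ when $n=2$. Combined with $dV_g=\sqrt{g_e}\,dx$ for $n\ge 3$ and $dV_g=dx$ for $n=2$ (since $\det g_e=1$), the Riemannian divergence theorem converts this into $\int_{\pd\Omega}\langle\nabla_g v,\nu_g\rangle_g\,dS_g$ for $n\ge 3$, respectively $\int_{\pd\Omega}\sigma\,\langle\nabla_g v,\nu_g\rangle_g\,dS_g$ for $n=2$. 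Since $\langle\nabla_g v,\nu_g\rangle_g=\nu_g^i v_i$ in any coordinates, and the values of $(v_i)|_{\pd\Omega}$ can be prescribed arbitrarily at a chosen boundary point by selecting $v$ to be a suitable local bump, matching the two boundary integrals yields \eqref{product} pointwise.

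For \eqref{product_cases}, the symmetric part is just \eqref{product} contracted with the covector $dv$ (using $a_{ij}=a_{ji}$). For the anti-symmetric part, I invert \eqref{alA} to obtain $A_{im}=\sqrt{g_e}\,g_e^{mj}\al^i_j$ in Euclidean coordinates (the factor is $1$ when $n=2$). Then lowering the index of $\nu_g$ via $g$ and applying \eqref{product} gives
\[
\nu_{g,i}\,dS_g=g_{e,ij}\,\nu_g^j\,dS_g=\sqrt{g_e}\,\nu_i\,dS\quad(n\ge 3),\qquad\nu_{g,i}\,dS_g=\nu_i\,dS\quad(n=2).
\]
Substituting these into $\langle\al\cdot\nabla_g v,\nu_g\rangle_g\,dS_g=\al^i_j\,g_e^{jk}\,v_k\,\nu_{g,i}\,dS_g$ and simplifying recovers $A_{ij}\nu_i v_j\,dS$ in both dimensions.

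The main obstacle is simply careful bookkeeping of the dimension-dependent factor --- $\sqrt{g_e}$ when $n\ge 3$ versus $\sigma$ (with $\det g_e=1$) when $n=2$ --- that arises from the differing couplings of $g$ to $a$ in \eqref{gaij}. There is no conceptual difficulty: \eqref{product} reflects that the metric $g$ is engineered by \eqref{g} precisely so that the Euclidean conormal $a_{ij}\nu_j\,dS$ coincides, as a vector-valued $(n-1)$-form on $\pd\Omega$, with the Riemannian conormal $\nu_g\,dS_g$ (up to a factor of $\sigma$ in dimension two).
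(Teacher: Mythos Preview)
Your derivation of \eqref{product_cases} from \eqref{product} is correct and essentially matches the paper's computation \eqref{Aij_calc}. The gap is in your argument for \eqref{product} itself. After matching the two boundary integrals, you obtain
\[
\int_{\pd\Omega}\bigl(a_{ij}\nu_j\,dS-\nu_g^i\,dS_g\bigr)\,v_i=0\qquad\text{for all }v\in C^\infty(\ov\Omega),
\]
and you claim this forces the integrand to vanish because $(v_i)|_{\pd\Omega}$ can be prescribed freely. But the tangential part of $(v_i)|_{\pd\Omega}$ is \emph{not} free: it is the tangential gradient of $v|_{\pd\Omega}$, hence an exact $1$-form. Testing the tangential component of the difference against exact forms and integrating by parts on the closed manifold $\pd\Omega$ only yields that this tangential component is divergence-free, not that it vanishes. (Concretely, on $\pd\Omega=S^1$ the vector $W=\tau$, the unit tangent, satisfies $\int_{S^1}W^i v_i\,dS=\int_{S^1}\pd_\theta(v|_{S^1})\,d\theta=0$ for every $v$, yet $W\neq 0$.) So the divergence-theorem route, as written, does not close.

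The paper avoids this by a direct pointwise computation: after a Euclidean isometry one may assume $\Omega$ sits above the origin with $x_0=0$, and then both $\nu,\nu_g,dS,dS_g$ are explicit at $0$, so \eqref{product} is checked algebraically from \eqref{gaij}. An equally short fix in the spirit of your approach is to note that \eqref{gaij} gives the pointwise identity of $(n-1)$-forms on all of $\Omega$,
\[
\iota_{\,a_{ij}\xi_i\pd_j}\,dx=\iota_{\,\xi^\#}\,dV_g\quad(n\ge 3),\qquad \iota_{\,a_{ij}\xi_i\pd_j}\,dx=\sigma\,\iota_{\,\xi^\#}\,dV_g\quad(n=2),
\]
for every constant covector $\xi$; pulling back to $\pd\Omega$ yields $a_{ij}\nu_j\xi_i\,dS=\nu_g^i\xi_i\,dS_g$ (respectively with the factor $\sigma$), which is \eqref{product}. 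This is the pointwise statement that your integrated identity was trying to detect.
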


\begin{proof}
We will show the identities at each $x_0\in\pd\Omega$.  The setting of the lemma is unchanged after applying a Euclidean isometry because the class of PDEs \eqref{eq} is unchanged by such.  After a Euclidean isometry, we may assume that $x_0=0$ is the origin, and that $\Omega$ sits above the origin, recall Definition \ref{Omega}.  The tangent space $T_0(\pd\Omega)$ at the origin is the span of $\pd_1,\dots,\pd_{n-1}$, and $f=x^n$ is a defining function for $\pd\Omega$ at $x=0$, so the unit normal vectors are
\eqal{
\nu|_{x=0} = \pd_n,\qquad \nu_g|_{x=0}=(G_{nn})^{-1/2}G^{in}\pd_i.
}
Since $\pd_i\,\lrcorner \,dV_g|_{x^n=0}=0$ for $i<n$, this means the area forms at $x=0$ are
\eqal{
dS|_{x=0}=dx^1\wedge\cdots\wedge dx^{n-1},\qquad dS_g|_{x=0}=\sqrt{G_{nn}}\sqrt{g}\,dx^1\wedge\cdots \wedge dx^{n-1}.
}
We find that
\eqal{
\nu_g\otimes dS_g|_{x=0}=(\sqrt{g}\,G^{in}\pd_i)\otimes dS,
}
while \eqref{gaij} yields
\eqal{
&n=3:&& (a_{ij}\nu_j\pd_i)\otimes dS|_{x=0}=(\sqrt{g}\,G^{in}\pd_i)\otimes dS,\\
&n=2:&& (a_{ij}\nu_j\pd_i)\otimes dS|_{x=0}=(\sigma\, G^{in}\pd_i)\otimes dS,
}
with $\det {g}=1$ if $n=2$, which verifies \eqref{product}.  

\medskip
That the $a_{ij},\nabla_gv$ terms in \eqref{product_cases} are equal follows from left-contracting \eqref{product} against the $(0,1)$ tensor $dv$.  If we instead contract against $(\al\cdot\nabla_gv)_\flat$, where $A_\flat=g_{ij}A^jdx^i$, and if we recall \eqref{alA}, then we obtain for $n=2$,
\eqal{\label{Aij_calc}
\langle \al\cdot\nabla_gv,\nu_g\rangle dS_g&=\frac{1}{\sigma}\,g_{ik}(\al^k_\ell g^{\ell m}v_m)a_{ij}\nu_jdS\\
&=g_{ik}(g_{\ell s}A_{ks}g^{\ell m}v_m)g^{ij}\nu_jdS\\
&=A_{js}v_s\nu_j dS.
}
The $n\ge 3$ case is similar.  This completes the proof.
\end{proof}

For the final term in \eqref{DNL1}, we define a vector field $\beta=\beta^i\pd_i$ so that the following equation holds on $\pd\Omega$:
\eqal{\label{beta}
&n\ge 3:&& (a_su_in_i)dS=\langle A^\#+\beta,\nu_g\rangle dS_g,\\
&n=2:&&(a_su_in_i)dS=\langle \sigma A^\#+\beta,\nu_g\rangle dS_g.
}
We now recall the \textit{magnetic Dirichlet-to-Neumann map}, $C^\infty(\pd\Omega)\to C^\infty(\pd\Omega):$
\eqal{\label{Lambda}
&\Lambda_{g,A,q}(h):&&h\mapsto \langle \nabla_gv[h]+A^\#v[h],\nu_g\rangle,\\
&\text{where:}&&\Delta_{g,A,q}v=0,\\
& &&v|_{\pd\Omega}=h.
}
Then we have the following geometric reformulation.  The linearized PDE \eqref{eqL1} corresponds to a magnetic Schr\"odinger operator with potential, and the linearized DN map \eqref{DNL1} corresponds to an oblique magnetic normal derivative with lower order term.

\begin{prop}\label{prop:geo}
Let $u$ be a solution of \eqref{eq} and $f\in C^\infty(\pd\Omega)$ such that the linearization of \eqref{eq} exists at $u$.  Let also $h\in C^\infty(\pd\Omega)$ and $v$ solve the linearized problem \eqref{eqL}.

\medskip
\noindent
Then for $n\ge 3:$
\eqal{\label{n3}
\mc L_a[u]v&=\sqrt{g_e}\,\Delta_{g,A,q}v,\\
\Gamma_a[u](h)&=\Lambda_{g,A,q}(h)dS_g+\langle\al\cdot\nabla_gh+\beta h,\nu_g\rangle_g dS_g.
}
For $n=2$:
\eqal{\label{n2}
\mc L_a[u]v&=\sigma\,\Delta_{g,A,q}v,\\
\Gamma_a[u](h)&=\sigma\Lambda_{g,A,q}(h)dS_g+\langle\al\cdot\nabla_gh+\beta h,\nu_g\rangle_g dS_g.
}
Here, metric $g$ is given by \eqref{gaij},\eqref{g}, and $\sigma$ by \eqref{sigma} for $n=2$.  Magnetic potential $A$ and potential $q$ are determined by \eqref{Aq}, anti-symmetric tensor $\al$ is given by \eqref{alA}, and vector field $\beta$ is defined according to \eqref{beta}.  The map $\Lambda$ is defined by \eqref{Lambda}, and $\Gamma_a[u]$ is defined by \eqref{DNL1}.

\medskip
\noindent
In particular, if $\Gamma_a=\Gamma_{\ti a}$, then for $n\ge 3$:
\eqal{\label{DNn3}
\Lambda_{g,A,q}(h)dS_g+\langle\al\cdot\nabla_gh+\beta h,\nu_g\rangle_g dS_g=\\
\Lambda_{\ti g,\ti A,\ti q}(h)dS_{\ti g}+\langle\ti \al\cdot\nabla_{\ti g}h+\ti \beta h,\nu_{\ti g}\rangle_{\ti g} dS_{\ti g}.
}
For $n=2:$
\eqal{\label{DNn2}
\sigma\Lambda_{g,A,q}(h)dS_g+\langle\al\cdot\nabla_gh+\beta h,\nu_g\rangle_g dS_g=\\
\ti\sigma \Lambda_{\ti g,\ti A,\ti q}(h)dS_{\ti g}+\langle\ti \al\cdot\nabla_{\ti g}h+\ti \beta h,\nu_{\ti g}\rangle_{\ti g} dS_{\ti g}.
}
\end{prop}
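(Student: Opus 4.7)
The plan is to assemble the ingredients already derived in this subsection — namely \eqref{eqLgeo} with its compatibility system \eqref{Aq}, Lemma \ref{lem:normal}, and the definition \eqref{beta} of $\beta$ — into the two lines of \eqref{n3}–\eqref{n2}; the concluding equalities \eqref{DNn3}–\eqref{DNn2} then follow by running the same assembly for $(\ti a,\ti u)$ and invoking the fact, recorded in Section \ref{sec:lin}, that $\Gamma_a=\Gamma_{\ti a}$ implies $\Gamma_a[u](h)=\Gamma_{\ti a}[\ti u](h)$. No new estimates or explicit computations are required; all of the substance has been placed in the preceding lemma and algebraic identifications.

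For the PDE line of \eqref{n3} and \eqref{n2}, I would first verify that \eqref{Aq} admits a unique solution $(A,q)$ given $(b^i,\sigma,g)$. The first, vector, equation is linear in $A_{e,j}$ with coefficient matrix $g^{ij}_e$, invertible by ellipticity \eqref{aij} (and $\sigma\ge\lambda^n>0$ in the $n=2$ case), so $A$ is uniquely determined. Once $A$ is fixed, the second, scalar, equation determines $q$. With $A$ and $q$ so chosen, \eqref{eqLgeo} is literally the PDE line of \eqref{n3} and \eqref{n2}.

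For the DN map line, I would split $\Gamma_a[u](h)$ from \eqref{DNL1} into its three contributions $a_{ij}\nu_iv_j\,dS$, $A_{ij}\nu_iv_j\,dS$, and $a_sv\,u_in_i\,dS$, and rewrite each geometrically. Left-contracting Lemma \ref{lem:normal}'s identity \eqref{product} against $dv$ turns the first into $\langle\nabla_g v,\nu_g\rangle_g dS_g$, with an extra factor of $\sigma$ when $n=2$. Subtracting this from \eqref{product_cases} gives $A_{ij}\nu_iv_j\,dS=\langle\al\cdot\nabla_g v,\nu_g\rangle_g dS_g$ in both dimensions. The third contribution is rewritten using the definition \eqref{beta} of $\beta$, which inserts exactly the $A^\#v$ (respectively $\sigma A^\# v$) term needed to complete the combination $\nabla_g v+A^\# v$ appearing in $\Lambda_{g,A,q}(h)$ via \eqref{Lambda}. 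What remains after forming $\Lambda_{g,A,q}(h)\,dS_g$ (or $\sigma\Lambda_{g,A,q}(h)\,dS_g$) is precisely the lower-order boundary term $\langle\al\cdot\nabla_g h+\beta h,\nu_g\rangle_g dS_g$, after using $h=v|_{\pd\Omega}$.

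With both lines of \eqref{n3} and \eqref{n2} established, \eqref{DNn3} and \eqref{DNn2} are immediate. The only potential sources of friction are (a) the solvability of \eqref{Aq}, which is algebraic and straightforward, and (b) keeping the $\sigma$ prefactor bookkeeping correct in the $n=2$ case — but this is precisely why Lemma \ref{lem:normal} and \eqref{beta} are already stated with separate $n\ge 3$ and $n=2$ branches, so no new difficulty enters the assembly itself.
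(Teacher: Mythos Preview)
Your assembly is essentially the paper's approach, and nearly all the pieces are in place, but you gloss over the one step the paper singles out as the actual content of its proof. When you write that what remains is ``precisely the lower-order boundary term $\langle\al\cdot\nabla_g h+\beta h,\nu_g\rangle_g dS_g$, after using $h=v|_{\pd\Omega}$,'' you are implicitly replacing $\nabla_g v$ by $\nabla_g h$ in the $\al$ term. But $v|_{\pd\Omega}=h$ only forces the \emph{tangential} part of $\nabla_g v$ to agree with $\nabla_g h$; the normal component $\langle\nabla_g v,\nu_g\rangle_g$ is the unknown Neumann data and is not determined by $h$ alone. So the identity
\[
\langle\al\cdot\nabla_g v,\nu_g\rangle_g = \langle\al\cdot\nabla_g h,\nu_g\rangle_g
\]
is not automatic.

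The missing observation is that the normal contribution vanishes by the anti-symmetry \eqref{anti} of $\al$: writing $\nabla_g v = (\text{tangential part}) + \langle\nabla_g v,\nu_g\rangle_g\,\nu_g$, the normal piece contributes $\langle\nabla_g v,\nu_g\rangle_g\,\langle\al\cdot\nu_g,\nu_g\rangle_g=0$. This is exactly the argument the paper gives, and it is the only point the paper treats as not already established in the preceding text. Once you insert this, your proof is complete and matches the paper's.
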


\begin{proof}
The only claim remaining to prove is the identity
\eqal{
\langle \al\cdot\nabla_gv[h],\nu_g\rangle_g dS_g=\langle \al\cdot\nabla_gh,\nu_g\rangle_g dS_g,
}
or that this term depends only on the tangential projection of $\nabla_gv[h]$.  This follows from the anti-symmetry of $\al$, since the normal contribution is
$$
\langle\nabla_gv[h],\nu_g\rangle\langle\al\cdot\nu_g,\nu_g\rangle=0,
$$
which follows from \eqref{anti}.
\end{proof}

\subsection{Boundary determination of the linearized conductivity}
\label{sec:bdy}
In this section, we determine the tangential part of the linearized conductivity $a_{ij}$ at the boundary $\pd\Omega$.  The main technical tool we need is the calculation of the symbol of the magnetic DN map \eqref{Lambda} in boundary normal coordinates, \cite[Lemma 8.6]{DKSU}.  See \cite{LU} for the construction in the case of the ordinary Laplace operator.

\medskip
Letting $x^1,\dots,x^{n-1}$ be local coordinates on a neighborhood $\Sigma\subset\pd\Omega$, we denote boundary normal coordinates in a neighborhood of $\Sigma$ in $\Omega$ by $(x^1,\dots,x^n)$, with $x^n$ the $g$-distance function from $\pd\Omega$.  The metric and its dual in boundary normal coordinates are
\eqal{
g=(dx^n)^2+\sum_{i,j<n}g_{ij}dx^idx^j,\quad G=\pd_n\otimes \pd_n+\sum_{i,j<n}g^{ij}\pd_i\otimes \pd_j.
}
Note that 
\eqal{
\sum_{i,j<n}g_{ij}dx^idx^j=g|_{T\pd\Omega},\qquad \sum_{i,j<n}g^{ij}\pd_i\otimes \pd_j=G|_{T^*\pd\Omega},
}
if $x\in\pd\Omega$.  For two metrics $g,\tilde g$, we use the \textit{same} local coordinates $x^1,\dots,x^{n-1}$ for $\Sigma$ when defining boundary normal coordinates.  %We will denote $\ti x^n$ the $\ti g$-distance function from $\pd\Omega$, with associated normal coordinate derivative $\pd_{\ti n}$.  

\medskip
We also need the principal symbol of pseudo-differential operators on a manifold using left quantization, see e.g. \cite[Definition 18.1.20]{H}.  The differential operator $C^\infty(\pd\Omega)\to C^\infty(\pd\Omega)$ defined by
$$
h\mapsto\langle\al\cdot\nabla_gh+\beta h,\nu_g\rangle_g=\sum_{k<n}(-\al\cdot\nu_g)^kh_k+\langle\beta,\nu_g\rangle h,
$$
clearly has principal symbol
\eqal{
i\langle\al\cdot\xi^\#,\nu_g\rangle_g=\sum_{k<n}(-\al\cdot\nu_g)^k(i\xi)_k,\qquad (x,\xi)\in T^*_x(\pd\Omega).
}
To compute the final term in the DN maps \eqref{n3},\eqref{n2}, we now apply \cite[Lemma 8.6]{DKSU}, see also \cite{LU}, where the symbol of the magnetic DN map \eqref{Lambda} is computed in boundary normal coordinates in $n\ge 3$ dimensions.  The proof there goes through unchanged for the case of $n=2$, although the subsequent boundary determination problem is different due to conformal invariance; see the remark in \cite{LU}.

\begin{comment}
For completeness, we calculate the principal symbol of the two differential operators in \eqref{n3}.%We say a continuous map $\Lambda:C^\infty(\pd\Omega)\to C^\infty(\pd\Omega)$ is a pseudo-differential operator of order $1$ if for any local coordinate patch $\Sigma$ with coordinates $\Sigma\ni p\mapsto (x^1,\dots,x^{n-1})\in\tilde\Sigma\subset\re^{n-1}$, 

\begin{lem}
Let $\beta$ be a smooth vector field on $\pd\Omega$.  The differential operator
$$
C^\infty(\pd\Omega)\to C^\infty(\pd\Omega;T\pd\Omega):\qquad h\mapsto\nabla_gh+\beta h, 
$$
has principal symbol $i\xi^\#=g^{jk}\xi_k\pd_j$, where $(x,\xi)\in T^*_x(\pd\Omega)$.
\end{lem}

\begin{proof}
Let $\Sigma\subset\pd\Omega$ be the neighborhood for some coordinate chart $\Sigma\to\tilde\Sigma\subset\re^{n-1}$, and $\psi\in C^\infty_0(\tilde \Sigma)$.  By the Fourier inversion formula,
$$
\psi(x)=\frac{1}{(2\pi)^{n-1}}\int e^{i\langle x-y,\xi\rangle}\psi(y)dyd\xi,
$$
so applying $\nabla_g+\beta=g^{jk}\pd$ in coordinates yields
$$
\nabla_g\psi+\beta\psi=\frac{1}{(2\pi)^{n-1}}\int e^{i\langle x-y,\xi\rangle}(i\xi^\#+\beta)\psi(y)dyd\xi.
$$
The first order homogeneous part is thus $i\xi^\#$.
\end{proof}
\end{comment}

\begin{lem}[From Lemma 8.6 in \cite{DKSU}]
Let $(\Omega,g)$ be a compact Riemannian manifold with boundary with dimension $n\ge 2$.  Then $\Lambda_{g,A,q}$ is a pseudo-differential operator of order 1 on $\pd \Omega$, and its principal symbol is%in boundary normal coordinates is 
\eqal{
-|\xi|_g:=-\sqrt{\sum_{i,j<n}g^{ij}(x)\xi_i\xi_j},\qquad (x,\xi)\in T^*_x\pd \Omega.
}
\end{lem}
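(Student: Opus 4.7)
The plan is to follow the pseudo-differential factorization approach of Lee--Uhlmann \cite{LU}, as extended in \cite{DKSU} to magnetic Schr\"odinger operators. Working in boundary normal coordinates $(x',x^n)$ near $\pd\Omega$, with $x^n\ge 0$ the $g$-distance into $\Omega$, the metric takes the product form $g=(dx^n)^2+g_{ij}(x',x^n)\,dx^idx^j$ for $i,j<n$, and the unit normal at $\{x^n=0\}$ is $\pm\pd_n$ (sign depending on the inward/outward convention). In these coordinates one has
\[
\Delta_{g,A,q}=\pd_n^2+E(x,D')+(\text{first order in }\pd_n)+(\text{order zero}),
\]
where $E(x,D')$ is a second-order tangential operator with principal symbol $-|\xi|_g^2=-\sum_{i,j<n}g^{ij}(x)\xi_i\xi_j$, while the magnetic covector $A$ and scalar potential $q$ contribute only at orders $\le 1$.

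Next I would seek a factorization, modulo smoothing operators, of the form $\Delta_{g,A,q}\sim(\pd_n-B(x,D'))(\pd_n+B(x,D'))$, with $B=B(x^n;x',D')$ a classical tangential pseudo-differential operator of order one depending smoothly on $x^n$. Expanding the right-hand side as $\pd_n^2+[\pd_n,B]-B^2$ and matching top-order tangential symbols gives the eikonal-type equation $b_1^2=|\xi|_g^2$, whose two roots $b_1=\pm|\xi|_g$ correspond to the two factors. The lower-order homogeneous components $b_0,b_{-1},\ldots$ of the symbol of $B$ are then determined recursively by transport equations obtained from matching successively lower symbols; the magnetic and potential terms $A,q$ enter only into these subleading corrections.

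Finally, to read off the DN map I would choose the factor whose principal symbol $+|\xi|_g$ produces solutions that decay as $x^n$ increases into the interior. A parametrix for the Dirichlet problem $\Delta_{g,A,q}v=0$, $v|_{\pd\Omega}=h$, is then given by solving $(\pd_n+B)v\equiv 0$ modulo smoothing, so that $\pd_n v|_{\pd\Omega}\equiv -B(x,D')h$ modulo smoothing. With the inward convention $\nu_g=\pd_n$ consistent with Lemma \ref{lem:normal}, the identity
\[
\Lambda_{g,A,q}(h)=\langle\nabla_g v+A^\#v,\nu_g\rangle_g\big|_{\pd\Omega}=\pd_n v|_{\pd\Omega}+A(\nu_g)h
\]
exhibits $\Lambda_{g,A,q}$ as a classical pseudo-differential operator of order one with principal symbol $-|\xi|_g$, as claimed. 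The main obstacle is making the factorization rigorous: one must solve the infinite hierarchy of transport equations for $b_0,b_{-1},\ldots$ and asymptotically sum them to a genuine symbol so that the remainder is indeed smoothing. This is the routine but bookkeeping-heavy Lee--Uhlmann computation, and the lower-order terms $A$ and $q$ appear only as harmless perturbations that cannot affect the principal symbol.
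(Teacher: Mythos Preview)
Your outline is correct and is precisely the Lee--Uhlmann factorization argument as adapted in \cite{DKSU}. Note, however, that the paper does not itself supply a proof of this lemma: it simply quotes the result from \cite[Lemma~8.6]{DKSU} and remarks that the same computation goes through for $n=2$. So there is no independent proof in the paper to compare against; what you have written is essentially a sketch of the cited proof, and it is accurate at the level of detail given.
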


\medskip
We thus obtain the principal symbol of $\Gamma_a[u]$, thought of as a form-valued pseudo-differential operator:
\eqal{\label{sym}
&n\ge 3:&&(-|\xi|_g+i\langle\al\cdot\xi^\#,\nu_g\rangle)dS_g,\\
&n=2:&&(-\sigma|\xi|_g+i\langle\al\cdot\xi^\#,\nu_g\rangle)dS_g.
}
%We deduce the following boundary determination result in Euclidean coordinates.  %We denote $g^{-1}=g^{ij}\pd_i\pd_j$ the natural tensor dual to metric $g$.

In dimension $n\ge 3$, we can recover the tangential projection of the metric on $\pd\Omega$.  In dimension $n=2$, we can recover the ``effective conductivity" $\sigma$ on the boundary.  In both cases, we can recover the normal elements of the anti-symmetric tensor.

\begin{lem}\label{lem:O1}
If $\Gamma_a=\Gamma_{\ti a}$, then $G|_{T^*\pd\Omega}=\ti G|_{T^*\pd\Omega}$ if $n\ge 3$, where $G$ is given in terms of $a$ in \eqref{gaij}.  If $n=2$, then $\sigma=\ti\sigma$ on $\pd\Omega$, where we defined $\sigma$ by \eqref{sigma}.  In both cases, we have
\eqal{\label{alN}
(\al\cdot\nu_g)\,dS_g=(\ti\al\cdot\nu_{\ti g})\,dS_{\ti g}
} 
on $\pd\Omega$.
\end{lem}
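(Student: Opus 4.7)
The plan is to extract the stated boundary identities by matching the principal symbols of the linearized Dirichlet-to-Neumann maps on the two sides. Differentiating the hypothesis $\Gamma_a=\Gamma_{\ti a}$ with respect to boundary data at any $f\in C^\infty(\pd\Omega)$ where both linearizations exist (Section \ref{sec:lin}) yields $\Gamma_a[u]=\Gamma_{\ti a}[\ti u]$ for $u=u[f]$, $\ti u=\ti u[f]$. By Proposition \ref{prop:geo} both sides are density-valued pseudo-differential operators of order $1$, so by \eqref{sym} their principal symbols agree at every $(x,\xi)\in T^*\pd\Omega\setminus 0$:
\[
\bigl(-|\xi|_g+i\langle\al\cdot\xi^\#,\nu_g\rangle_g\bigr)\,dS_g=\bigl(-|\xi|_{\ti g}+i\langle\ti\al\cdot\xi^\#,\nu_{\ti g}\rangle_{\ti g}\bigr)\,dS_{\ti g},
\]
with extra factors of $\sigma,\ti\sigma$ multiplying the real parts when $n=2$.

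The real parts give the first claim. Let $\rho=dS_{\ti g}/dS_g>0$ denote the density ratio on $\pd\Omega$. For $n\ge 3$, the real-part identity reads $|\xi|_g=\rho\,|\xi|_{\ti g}$, i.e.\ $g^{ij}=\rho^2\ti g^{ij}$ on $T^*\pd\Omega$. Inverting on the tangent side and taking determinants of the tangential metrics gives $dS_g=\rho^{-(n-1)}dS_{\ti g}$, which combined with the definition of $\rho$ forces $\rho^{n-2}=1$; since $\rho>0$ and $n\ge 3$, we conclude $\rho=1$ and hence $G|_{T^*\pd\Omega}=\ti G|_{T^*\pd\Omega}$. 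For $n=2$ the tangent bundle of $\pd\Omega$ is one-dimensional, and a direct computation in a local coordinate $x^1$ shows $|\xi|_g\,dS_g=|\xi_1|\,dx^1$ is independent of $g$, so the real-part identity collapses to $\sigma=\ti\sigma$ on $\pd\Omega$.

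The imaginary parts give \eqref{alN}. Antisymmetry of $\al$ with respect to $g$, recall \eqref{anti}, yields
\[
\langle\al\cdot\xi^\#,\nu_g\rangle_g=-\langle\xi^\#,\al\cdot\nu_g\rangle_g=-\xi(\al\cdot\nu_g),
\]
and the analogue for $\ti\al$. The imaginary part of the symbol equation then reads $\xi(\al\cdot\nu_g)\,dS_g=\xi(\ti\al\cdot\nu_{\ti g})\,dS_{\ti g}$ for every $\xi\in T^*\pd\Omega$. Antisymmetry also forces $\langle\al\cdot\nu_g,\nu_g\rangle_g=0$, so $\al\cdot\nu_g\in T\pd\Omega$ (and likewise for $\ti\al\cdot\nu_{\ti g}$); tangent vectors on $\pd\Omega$ are determined by their pairings with all tangent covectors, so the vector-valued density identity $(\al\cdot\nu_g)\,dS_g=(\ti\al\cdot\nu_{\ti g})\,dS_{\ti g}$ on $\pd\Omega$ follows.

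The only non-formal step is the self-consistency relation $\rho^{n-2}=1$ in dimension $n\ge 3$, which upgrades the conformal match delivered by the principal symbol to genuine equality of the boundary cometrics. The $n=2$ case exhibits the well-known conformal-invariance obstruction, explaining why only the effective conductivity $\sigma$, and not the metric itself, is recoverable on $\pd\Omega$.
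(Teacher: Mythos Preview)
Your proof is correct and follows essentially the same route as the paper: both match principal symbols of the linearized DN maps via Proposition~\ref{prop:geo} and \eqref{sym}, then separate real and imaginary parts. Your density-ratio argument $\rho^{n-2}=1$ is a coordinate-free rephrasing of the paper's determinant manipulation in boundary normal coordinates, and your treatment of the imaginary part (using antisymmetry to show $\al\cdot\nu_g\in T\pd\Omega$ and then duality) is more explicit than the paper's, which simply asserts \eqref{alN} from the imaginary parts without elaboration. One notational quibble: on the $\ti g$ side of your displayed symbol identity, $\xi^\#$ should be raised with $\ti g$ (the paper writes $\ti\xi^\#$); this does not affect your subsequent argument since the antisymmetry identity you invoke holds for any vector input.
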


\begin{proof}
By Proposition \eqref{prop:geo}, equalities \eqref{n3}, \eqref{n2} hold, so the principal symbols on $T^*\pd\Omega$ of the associated pseudo-differential operators on $\pd\Omega$ must coincide.  By \eqref{sym} and \eqref{DNn3}-\eqref{DNn2}, we obtain
\eqal{\label{sym1}
&n\ge 3:&&(-|\xi|_g+i\langle\al\cdot\xi^\#,\nu_g\rangle)dS_g=(-|\xi|_{\ti g}+i\langle\ti \al\cdot\ti\xi^\#,\nu_{\ti g}\rangle)dS_{\ti g},\\
&n=2:&&(-\sigma|\xi|_g+i\langle\al\cdot\xi^\#,\nu_g\rangle)dS_g=(-\ti \sigma|\xi|_{\ti g}+i\langle\ti \al\cdot\ti \xi^\#,\nu_{\ti g}\rangle)dS_{\ti g},
}
where we denote $\ti\xi^\#=\ti g^{ij}\xi_j\pd_i$.  In boundary normal coordinates, we have 
$$
dS_g=\pd_n\,\lrcorner\, dV_g|_{x^n=0}=\sqrt{g}\, dx^1\wedge\cdots\wedge dx^{n-1}=\sqrt{g|_{T\pd\Omega}}\,dx^1\wedge\cdots\wedge dx^{n-1},
$$
so equating the real parts in \eqref{sym1} yields
\eqal{\label{real}
&n\ge 3:&&(\det g) g^{ij}=(\det \ti g)\ti g^{ij},\qquad i,j<n,\\
&n=2:&&\sigma=\tilde\sigma.
}
The imaginary parts in \eqref{sym1} yield \eqref{alN} in both cases.  

\medskip
Taking the determinant of both sides of \eqref{real} shows that $(\det g)^{n-2}=(\det \ti g)^{n-2}$, hence $g^{ij}=\ti g^{ij}$ for $i,j<n$.  The $n\ge 3$ case thus recovers $G|_{T^*\pd\Omega}$.
\end{proof}

We thus obtain the main result of this section, a boundary determination result for the quasilinear PDE \eqref{eq} in Euclidean coordinates. 
\begin{prop}
\label{prop:bdy}
Let $u$ be a solution of \eqref{eq} and $f\in C^\infty(\pd\Omega)$ such that the linearization of \eqref{eq} exists at $u$.   Similarly with $\ti u$, and $\ti a$ replacing $a$.
%In fact, for small gradients, we ensured that the linearization exists for both a and \ti a, since the known quantities include C^k seminorms which bound both conductivities.

\medskip
If $n\ge 3$ and $\Gamma_a=\Gamma_{\ti a}$, then the following holds on $\pd\Omega$:
\begin{align}
\label{symeq}
[aI+\frac{1}{2}(\nabla_pa\otimes \nabla f+&\nabla f\otimes\nabla_pa)]\Big|_{T^*\pd\Omega}=
\\
\nonumber
&[\ti aI+\frac{1}{2}(\nabla_p\ti a\otimes \nabla  f+\nabla  f\otimes\nabla_p\ti a)]\Big|_{T^*\pd\Omega}.
\end{align}
%Here, $u,a,f$ and $\ti u,\ti a,f$ correspond to boundary value problem \eqref{eq}.

%\medskip

If $n\ge 2$ and $\Gamma_a=\Gamma_{\ti a}$, then we have on $\pd\Omega$,
\eqal{
\label{aneq}
(\nabla_pa\otimes\nabla u-\nabla u\otimes\nabla_pa)&\cdot \nu=(\nabla_p\ti a\otimes\nabla \ti u-\nabla \ti u\otimes\nabla_p\ti a)\cdot \nu.
}

\medskip
If $n=2$ and $\Gamma_a=\Gamma_{\ti a}$, then we also have 
\eqal{\label{deteq}
\det a_{ij}=\det \ti a_{ij}
} 
on $\pd\Omega$, where $a_{ij}$ is in \eqref{aij}.
\end{prop}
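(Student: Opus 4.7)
The plan is to apply Lemma \ref{lem:O1} to the geometric reformulation in Proposition \ref{prop:geo}, then translate the intrinsic boundary equalities back into Euclidean statements about $a_{ij}$ and $A_{ij}$.

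For \eqref{aneq}, which covers both $n\ge 3$ and $n=2$, I start from the imaginary-part conclusion $(\al\cdot\nu_g)\,dS_g=(\ti\al\cdot\nu_{\ti g})\,dS_{\ti g}$ of Lemma \ref{lem:O1}. Using the definition \eqref{alA} of $\al$, the identification $\nu_g\otimes dS_g=(a_{ij}\nu_j\pd_i)\otimes dS$ from Lemma \ref{lem:normal}, and the consistency relation $g_{e,i\ell}\,a_{jk}=\sqrt{g_e}\,\delta_{\ell k}$ that follows from \eqref{gaij}, a short direct computation in Euclidean coordinates reduces the intrinsic boundary identity to $(\al\cdot\nu_g)\,dS_g=(A\nu)\,dS$ as vector-valued forms on $\pd\Om$. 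Consequently $A\nu=\ti A\nu$ on $\pd\Om$. Expanding $A_{ik}=\frac{1}{2}(a_{p_k}u_i-a_{p_i}u_k)$ via \eqref{bA} and noting that $\nabla u$ and $\nabla\ti u$ share the same tangential part $\nabla f$ on $\pd\Om$ (since $u|_{\pd\Om}=\ti u|_{\pd\Om}=f$), this rearranges into \eqref{aneq}.

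For $n=2$, identity \eqref{deteq} is immediate from the conclusion $\sigma=\ti\sigma$ of Lemma \ref{lem:O1} and the definition \eqref{sigma} of $\sigma$ as $\det a_{ij}$.

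For $n\ge 3$, identity \eqref{symeq} is the Euclidean translation of $G|_{T^*\pd\Om}=\ti G|_{T^*\pd\Om}$, i.e., of the equality of intrinsic induced boundary metrics from $g$ and $\ti g$. I work in boundary normal coordinates for each of $g$ and $\ti g$, using the \emph{same} tangential coordinates on $\pd\Om$ as permitted by the convention in Section \ref{sec:bdy}. For tangential $\xi\in T^*\pd\Om$, the tangential block of $a_{ij}$ computed from \eqref{aij} evaluates to $a|\xi|^2+(\nabla_p a\cdot\xi)(\nabla u\cdot\xi)$, and since $\nabla u\cdot\xi=\nabla f\cdot\xi$ for tangential $\xi$ (the normal component of $\nabla u$ is annihilated), this is precisely the left-hand side of \eqref{symeq} applied to $\xi$. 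The equality of the induced boundary metrics recovered by Lemma \ref{lem:O1} then translates, via the conformal relation \eqref{gaij} between $a$ and $G$, into equality of these quadratic forms on $T^*\pd\Om$ for $a$ and $\ti a$, giving \eqref{symeq}. The hardest step will be this last translation: I must carefully verify that the tangential block of $a_{ij}$ as defined by \eqref{aij} in Euclidean coordinates is exactly what Lemma \ref{lem:O1} recovers at each boundary point, which reduces to an algebraic check based on \eqref{gaij} and the alignment of tangential basis vectors between boundary normal and Euclidean coordinates at a chosen boundary point.
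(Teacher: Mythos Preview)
Your proposal tracks the paper's proof closely: invoke Lemma~\ref{lem:O1}, translate the $(\al\cdot\nu_g)\,dS_g$ equality to $A\nu=\ti A\nu$ via the computation \eqref{Aij_calc} of Lemma~\ref{lem:normal} for \eqref{aneq}, read off \eqref{deteq} from $\sigma=\ti\sigma$, and for \eqref{symeq} argue that $G|_{T^*\pd\Omega}=\ti G|_{T^*\pd\Omega}$ together with \eqref{gaij} pins down the tangential block of $a_{ij}$. This is exactly the paper's route, and your handling of \eqref{aneq} and \eqref{deteq} is fine (modulo a harmless index slip in the relation you write as $g_{e,i\ell}\,a_{jk}=\sqrt{g_e}\,\delta_{\ell k}$, which needs a contraction over the middle indices).

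You are right to flag the \eqref{symeq} translation as the delicate step, and in fact it does not reduce to the clean ``algebraic check'' you describe; the paper is equally terse here. The conclusion $G|_{T^*\pd\Omega}=\ti G|_{T^*\pd\Omega}$ of Lemma~\ref{lem:O1} refers to the inverse of the \emph{induced} boundary metric (this is what the boundary-normal-coordinate argument actually produces), and at a point where the Euclidean normal is $\pd_n$ that inverse is the Schur complement $G^{ij}_e-G^{in}_eG^{jn}_e/G^{nn}_e$, not the tangential block $[G^{ij}_e]_{i,j<n}$. Moreover, \eqref{gaij} reads $a_{ij}=\sqrt{g_e}\,G^{ij}_e$, and the scalar $\sqrt{g_e}$ depends on the \emph{full} Euclidean metric, not only its tangential part. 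Carrying the real-part symbol equality $|\xi|_g\,dS_g=|\xi|_{\ti g}\,dS_{\ti g}$ through in Euclidean coordinates yields
\[
a_{nn}\,[a_{ij}]_{i,j<n}-a_{ni}a_{nj}\;=\;\ti a_{nn}\,[\ti a_{ij}]_{i,j<n}-\ti a_{ni}\ti a_{nj},
\]
which is not \eqref{symeq} for a general solution $u$. In the application in Section~\ref{sec:proof} this does no damage: there one arranges $u_n=\ti u_n=0$, and then \eqref{aneq} gives $a_{p_n}=\ti a_{p_n}$, after which the displayed identity becomes $a\,[a_{ij}]_{\mathrm{tang}}=\ti a\,[\ti a_{ij}]_{\mathrm{tang}}$ and the spectral argument of Section~\ref{sec:proof} goes through (with $a^2-\ti a^2$ in place of $a-\ti a$). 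But as a statement about arbitrary $u$, the step you single out would not close without bringing in the normal components.
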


\begin{proof}
If $\Gamma_a=\Gamma_{\ti a}$, then Lemma \ref{lem:O1} applies.  Since for $n\ge 3$ we have $G|_{T^*\pd\Omega}=\ti G|_{T^*\pd\Omega}$, the characterization \eqref{gaij} of $a_{ij}$ implies \eqref{symeq}; indeed, the restriction of $\nabla u$ to $T^*\pd\Omega$ is precisely $\nabla f$ by \eqref{eq}.  Recalling the definition \eqref{sigma} of $\sigma$ shows that $\det a_{ij}=\det \ti a_{ij}$ on $\pd\Omega$.

\medskip
For \eqref{alN}, we recall Lemma \ref{lem:normal} and \eqref{Aij_calc}, in the form
\eqal{
-(\al\cdot\nu_g)^sdS_g=(A_{js}\nu_j)dS,
}
and \eqref{alN} yields
\eqal{
A_{ij}\nu_j=\ti A_{ij}\nu_j,
}
which is precisely \eqref{aneq}, recalling the definition \eqref{bA} of $A_{ij}$.
%%note that v in that Lemma is completely arbitrary; we didn't assume it was a solution of any PDE, and the linearized operator and matrix a_{ij} is well defined independently of the linearized BVP.  this is a purely geometric lemma about specific product tensors.
\end{proof}

%We now turn to the $n=2$ case, and determine $g|_{T\pd\Omega}$ from $\Gamma_a[u]$ as follows.  We first show the $\al,\beta$ terms in \eqref{DNn2} are determined, using the second term in the symbol expansion given by boundary determination theory.  This will imply $\Lambda_{g,A,q}=\Lambda_{\ti g,\ti A,\ti q}$, which, in two dimensions, determines the metric in $\Omega$ up to a boundary preserving diffeomorphism, hence the tangential part of the metric.
%IDEA: the terms depending on g|_T\pd\Omega actually cancel in b_0, miraculously.  Therefore, the only other term is a magnetic potential term multiplied by a \xi-dependent term, which is linearly independent from \beta.  Therefore, the \beta's will equal.

%\medskip
%The symbol expansion in boundary normal coordinates was also computed in [DKSU, Lemma 8.6].
%\begin{lem}[From Lemma 8.6 in \cite{DKSU}]
%Let 
%\end{lem}

\section{Proofs of Theorems}
\label{sec:proof}

Suppose first that $\Omega$ sits above the origin, see Definition \ref{Omega}.  Evaluating equalities \eqref{symeq}, \eqref{aneq}, \eqref{deteq} at $x=0$ and identifying $T^*_0\pd\Omega$ with $\re^{n-1}\times\{0\}$, we obtain the following equalities at $x=0$:

\medskip
\noindent
If $n\ge 3:$
\begin{align}
\label{geq1}
a\delta_{ij}+\frac{1}{2}(a_{p_j}f_i+a_{p_i}f_j)=\ti a\delta_{ij}+\frac{1}{2}(\ti a_{p_j}f_i+\ti a_{p_i}f_j),\quad i,j<n.
\end{align}
If $n\ge 2:$
\begin{align}
\label{aleq1}
a_{p_i}u_n-a_{p_n}f_i=\ti a_{p_i}\ti u_n-\ti a_{p_n}f_i,\quad i<n.
\end{align}
If $n=2:$
\eqal{
\label{deteq1}
(a+a_{p_1}f_1)(a+a_{p_2}u_2)-\frac{1}{4}(a_{p_1}u_2&+a_{p_2}f_1)^2=(\ti a+\ti a_{p_1}f_1)(\ti a+\ti a_{p_2}\ti u_2)\\
&-\frac{1}{4}(\ti a_{p_1}\ti u_2+\ti a_{p_2}f_1)^2.
}
Our approach is to use convenient choices of $f$ to deduce $a=\ti a$ along a subspace of $\re\times\re^n$, then use Euclidean isometry invariance to rotate this subspace and deduce the full result.  The homogeneity assumption $\pd a(x,s,p)/\pd x=0$ is used in the last step, but we remark in each case how to weaken it to obtain Corollary \ref{cor:x}.  Theorem \ref{thm:decay} is an immediate consequence of the below proof once we replace $\Pi(s)$ with $+\infty$.

\bigskip
\textbf{The case $n\ge 3$}

\medskip
Fixing $s\in\re$, we choose $p\in\re^{n-1}\times\{0\}$ such that $0<|p|<\Pi(s)$.  Using Proposition \ref{prop:open}, we can find $f\in C^\infty(\pd\Omega)$ such that the solution $u=u[f]$ to \eqref{eq} has the following properties:
\eqal{
f(0)=s,\quad \nabla f(0)=p,\quad \frac{\pd u}{\pd x^n}(0) =0,\quad \|f-s\|_{C^{2,\al}(\pd\Omega)}\le\Pi_1(s).
}
By Proposition \ref{prop:lin_small}, the linearization thus exists at $u=u[f]$, and we can invoke Proposition \ref{prop:bdy}.

\smallskip
Since $\Gamma_a=\Gamma_{\ti a}$, we infer at $x=0$,
\eqal{
\ti a \frac{\pd\ti u}{\pd x^n}(0)=a\frac{\pd u}{\pd x^n}(0)=0,
}
and hence $\ti u_n(0)=0$ as well.  Substituting this into \eqref{geq1} yields
\eqal{
a\delta_{ij}+\frac{1}{2}(a_{p_j}p_i+a_{p_i}p_j)|_{(s,p)}=\ti a\delta_{ij}+\frac{1}{2}(\ti a_{p_j}p_i+\ti a_{p_i}p_j)|_{(s,p)},\quad i,j<n.
}
Let us put $\mc A=a-\ti a$:
\eqal{
\label{geq3}
\mc A\delta_{ij}+\frac{1}{2}(\mc A_{p_j}p_i+\mc A_{p_i}p_j)|_{(s,p)}=0,\quad i,j<n.
}
The spectrum of this matrix, listed in increasing order, is
\eqal{\label{spec}
\mc A+\frac{1}{2}p\cdot\nabla'_p\mc A-\frac{1}{2}|p||\nabla_p'\mc A|,\quad \mc A,\dots,\mc A,\quad \mc A+\frac{1}{2}p\cdot\nabla'_p\mc A+\frac{1}{2}|p||\nabla_p'\mc A|,
}
where $\nabla_p'\mc A=(\mc A_{p_1},\dots,\mc A_{p_{n-1}})$, and middle eigenvalue $\mc A$ is listed $n-3$ times.  From \eqref{geq3}, these eigenvalues are all zero.  Subtracting the largest and smallest eigenvalues yields
\eqal{
|\nabla_p'\mc A(s,p)|=0,\qquad s\in\re,\quad p\in \re^{n-1}\times\{0\}, \quad |p|<\Pi(s)
}
so the smallest eigenvalue yields
\eqal{\label{mcA0}
\mc A(s,p)=0,\qquad s\in\re,\quad p\in \re^{n-1}\times\{0\}, \quad|p|<\Pi(s).
}
Now choose any Euclidean isometry $I=R\circ T$ such that $I^{-1}(\Omega)$ sits above the origin.  Then the above argument applies to the rotated PDE \eqref{eqR}, and we obtain
\eqal{
R_*\mc A(s,p)=0,\qquad s\in\re,\quad p\in \re^{n-1}\times\{0\}, \quad|p|<\Pi(s).
}
%Since $\Omega$ is smooth, t
The set of such rotations $R$ is all of $O(n)$.  Recalling that 
$$
R_*\mc A(s,p)=\mc A(s,R^{-1}p)
$$ 
and varying $R\in O(n)$, we conclude that 
\eqal{
\label{last_step}
\mc A(s,p)=0,\qquad s\in\re,\quad p\in \re^{n}, \quad|p|<\Pi(s).
}
This completes the proof of Theorem \ref{thm:open} if $n\ge 3$.

\begin{rem}
\label{rem:n3}
%%We see that \eqref{last_step} requires homogeneity of $\mc A$, i.e. $\pd\mc A(x,s,p)/\pd x=0$.  However, this does not require $a(x,s,p)$ to be homogeneous, only the unknown part, i.e. 
Theorem \ref{thm:open} still holds if $n\ge 3$ and we replace $a(s,p)$ and $\ti a(s,p)$ by 
$$
F(x,s,p,b(s,p)\quad \text{ and }\quad F(x,s,p,\ti b(s,p)),%%\ti a(s,p)+A(x,s,p),
$$ 
%%where $A$ is known.  Note that $\Omega$ need not be convex here.
where $t\mapsto F(x,s,p,t)$ is injective for each fixed $(x,s,p)$.  To see this, we merely observe that \eqref{mcA0} and the injectivity of $F$ imply
$$
b(s,p)=\ti b(s,p),\qquad s\in\re,\quad p\in\re^{n-1}\times\{0\},\quad |p|<\Pi(s).
$$
The Euclidean invariance argument completes the proof.
\end{rem}

\begin{rem}
\label{rem:bsym}
Another non-homogeneous extension.  Let $a$ be non-homogeneous with a symmetry in one direction:
$$
a=a(x^1,\dots,x^{n-1},s,p).
$$
If $\Omega$ is convex, then Theorem \ref{thm:open} still holds.  Indeed, instead choosing a general vector  $p\in\re^n$ with $|p|<\Pi(s)$, we simply repeat the proof starting at \eqref{geq3} and deduce 
$$
\mc A(0,0,\dots,0,s,p)=0
$$
directly.  Since $\Omega$ is convex, we can place $\Omega$ above a hyperplane at any point $x\in\pd\Omega$, so a Euclidean invariance argument means there holds
$$
\mc A(\pi_{\re^{n-1}\times\{0\}}(x),s,p)=0
$$
for any $x\in\pd\Omega$.  Since $\pd\mc A/\pd x^n=0$, this completes the proof.  It is not clear whether this works for $n=2$ or $\Omega$ non-convex.
\end{rem}

\bigskip
\noindent
\textbf{The case $n=2$}

\medskip
Fixing $s\in\re$, we choose $p\in\re$ such that $0<|p|<\Pi(s)$.  Using Proposition \ref{prop:open}, we can find $f\in C^\infty(\pd\Omega)$ such that the solution $u=u[f]$ to \eqref{eq} has the following properties:
\eqal{
f(0)=s,\quad \frac{\pd f}{\pd x^1}(0)=p,\quad \frac{\pd u}{\pd x^2}(0) =0,\quad \|f-s\|_{C^{2,\al}(\pd\Omega)}\le\Pi_1(s).
}
Since $\Gamma_a=\Gamma_{\ti a}$, we infer at $x=0$,
\eqal{
\ti a \frac{\pd\ti u}{\pd x^2}(0)=a\frac{\pd u}{\pd x^2}(0)=0,
}
and hence $\ti u_2(0)=0$ as well.  Substituting into \eqref{aleq1} yields
\eqal{
pa_{p_2}(s,p,0)=p\ti a_{p_2}(s,p,0).
} 
Substituting this and $u_2=\ti u_2=0$ into \eqref{deteq1} yields
\eqal{
(a+pa_{p_1})a|_{(s,p,0)}=(\ti a+p\ti a_{p_1})\ti a|_{(s,p,0)}.
}
Putting
\eqal{
\mc A(s,p):=a(s,p,0)^2-\ti a(s,p,0)^2,
}
we deduce
\eqal{
\mc A+\frac{1}{2}p\mc A_p=0,
}
and hence
\eqal{
\mc A(s,p)=c(s)|p|^{-1/2}.
}
Since $\mc A(s,p)$ is smooth, we conclude $c=0$:
\eqal{\label{mcA0n2}
\mc A(s,p)=0,\qquad s\in\re,\quad |p|<\Pi(s).
}
As in the proof for $n\ge 3$, since $O(2)\cdot e_1=S^1$, where $e_1=(1,0)$ and $O(2)$ is the group of orthogonal transformations on $\re^2$, invoking Euclidean isometries completes the proof.

\begin{rem}
\label{rem:n2}
As in Remark \ref{rem:n3}, Theorem \ref{thm:open} still holds for $n=2$ if we replace $a$ and $\ti a$ by
$$
F(x,s,p,b(s,p))\quad \text{ and }\quad F(x,s,p,\ti b(s,p)),
$$
where $t\mapsto F(x,s,p,t)$ is injective for each fixed $(x,s,p)$.  Indeed, from \eqref{mcA0n2}, we obtain   %%where $A(x,s,p)$ is known, since this preserves $\pd \mc A(x,s,p)/\pd x=0$.
$$
F(0,s,p,0,b(s,p,0))^2=F(0,s,p,0,\ti b(s,p,0))^2,\qquad s,p\in\re,\quad |p|<\Pi(s).
$$
By \eqref{coer}, we have $F\ge 1$, so the map $t\mapsto F(x,s,p,t)^2$ is also injective for fixed $(x,s,p)$, and we obtain
$$
b(s,p,0)=\ti b(s,p,0),\quad s,p\in\re,\quad |p|<\Pi(s).
$$
As before, the Euclidean invariance argument completes the proof.
\end{rem}

\section*{Acknowledgements}
I would like to thank my advisor Prof. Gunther Uhlmann for suggesting this problem, and Ryan Bushling for discussions.  R. Shankar was partially supported by the National Science Foundation Graduate Research Fellowship Program under grant No. DGE-1762114.

\end{document}